\newcommand{\ZZ}{\mathbb{Z}}
\newcommand{\FF}{\mathbb{F}}
\newcommand{\trm}[1]{\textrm{#1}}
\newtheorem{theorem}{Theorem}[section]
\newtheorem{lemma}[theorem]{Lemma}
\date{}
\title{Analogues of the $3x+1$ Problem in Polynomial Rings of Characteristic 2}
\author{Daniel Nichols}
\begin{document}

\maketitle

\begin{abstract}
The Collatz conjecture (also known as the $3x+1$ problem) concerns the behavior of the discrete dynamical system on the positive integers defined by iteration of the so-called $3x+1$ function. We investigate analogous dynamical systems in rings of functions of algebraic curves over $\FF_2$. We prove in this setting a generalized analogue of a theorem of Terras concerning the asymptotic distribution of stopping times. We also present experimental data on the behavior of these dynamical systems.
\end{abstract}

\noindent This is a preprint of an article published by Taylor \& Francis in \textit{Experimental Mathematics} on October 7, 2016, available online: \url{http://www.tandfonline.com/doi/full/10.1080/10586458.2016.1227734}.

\section{Introduction}

The dynamical system on the positive integers defined by the $3x+1$ map $T:\ZZ \rightarrow \ZZ$ can be modelled by a one-dimensional random walk, as described in \cite{lagarias-models}. We can write
	$$ \log_2 T^N(x) \approx \log_2 x - N + b_3\sum_{k=0}^N X_k, $$
where $b_3$ is a constant and the $X_k$ are IID (independent identically distributed) Bernoulli random variables. That is, $X_k$ takes values in $\left\{ 0, 1 \right\}$ each with probability $1/2$. The most well-known unsolved problem concerning this dynamical system is the Collatz conjecture, which states that all trajectories eventually reach 1.

Models of this form can also be used for the more general $mx+1$ problem, where $m > 1$ is an odd positive integer, simply by substituting a different constant $b_m$ in place of $b_3$. Specifically, we define $b_m = \log_2 (m + 2/3)$. When $m = 3$, this model predicts that almost every positive integer $x$ has finite stopping time. However, for all odd $m > 3$, it predicts that a significant number of trajectories have infinite stopping time. The statistical tendency of such a random walk to diverge is entirely determined by value of $b_m$. As $m$ increases, the probability of divergence in the $mx+1$ system quickly approaches 1.

In this paper we discuss a class of similar $mx+1$ dynamical systems in $\FF_2[t]$ which can also be modeled by a random walk. Like those who have previously studied these systems, we are motivated by the principle that problems concerning $\FF_p[t]$ are often easier to solve than the corresponding problems in $\ZZ$, since we can often exploit the rich algebraic structure of polynomial rings over a field to simplify both numerical computations and theoretical analysis. The random walk model for $mx+1$ systems turns out to be even more accurate in $\FF_2[t]$ than in the integer case, and the parameter $b_m$ is always an integer. We show that in a certain sense the random walks associated to these polynomial $mx+1$ problems interpolate between those of the traditional $mx+1$ problems in $\ZZ$, providing examples of a more general class of pseudo-random dynamical systems.

From algebraic geometry we know that $\FF_p[t]$ is the ring of regular functions of the affine line over $\FF_p$.  This connection and the rich geometric tools available are the reasons why many arithmetic problems over $\ZZ$ become more approachable when we work over $\FF_p[t]$. Once we bring in this geometric picture, it is natural to go beyond the affine line and ask whether there is a way to define a more general type of $mx + 1$ system on other algebraic curves over $\FF_2$. We construct such systems for curves of the form $x^2 + tx + r(t) = 0$, where $r(t)$ is an irreducible polynomial over $\FF_2$. (The linear term $xt$ is necessary in order to define a smooth affine hyperelliptic curve in characteristic 2.) Since this is a family of hyperelliptic curves, the genus and other geometric properties are well-understood.

For these new $mx+1$ systems, the random walk model is somewhat different from the one for $\FF_2[t]$. Instead of moving left or right with equal probability (i.e. a `coin flip'), we use a random walk with unequal probabilities. While this model is not as directly comparable to the classical $3x+1$ random walk model, it does provide an interesting generalization and allows us to prove some useful results.

Figure \ref{fig-randomwalk-line} below shows the random walks associated to $mx+1$ problems in both $\ZZ$ and $\FF_2[t]$, organized by the value of $b_m$. Towards the left side of the scale ($b_m \leq 1$), trajectories are very likely to converge to one. In fact, Hicks et al. \cite{hicks-polynomial-analogue} were able to prove the analogue of the Collatz conjecture for the case $m = t + 1$. Towards the right side of the scale ($b_m > 2$), trajectories exhibit an increasingly strong tendency to diverge. We prove that for all $m$ of degree at least 3, there is a nonzero probability that a randomly chosen polynomial will have infinite stopping time. This means that the Collatz conjecture must be false when $\deg m > 3$. However, trajectories with infinite stopping time do not necessarily diverge, so the existence of true divergent trajectories is still an open question for most of these polynomials.

This leaves the two quadratic odd polynomials $t^2 + 1$ and $t^2 + t + 1$ as the most interesting cases. For the first of these, Matthews et al. \cite{matthews-syracuse} showed that the trajectory of a certain constructed polynomial must diverge. For the second, we observed empirically many nontrivial cyclic orbits of the $mx+1$ function. So the Collatz analogue is disproved in these cases as well.

The original $3x+1$ problem in $\ZZ$ lies in the interesting middle area $1 < b_m < 2$, where the asymptotic properties of the random walk are least predictable. About these systems it is difficult to prove anything at all.

\begin{figure*}[h]\label{fig-randomwalk-line}
\centering
\caption{Tendency of random walk models associated with different $mx+1$ problems}
	\begin{tikzpicture}[scale=1]
		\draw [<->, very thick] (-0.5,0) -- (8.5,0);
		\node [right] at (8.5,0) {$b_m$};
		
		\draw [very thick] (0,-0.3) -- (0,0.3);
		\node [above] at (0,0.3) {$0$};
		\draw [very thick] (4,-0.3) -- (4,0.3);
		\node [above] at (4,0.3) {$2$};
		\draw [very thick] (8,-0.3) -- (8,0.3);
		\node [above] at (8,0.3) {$4$};
		
		\draw [thick] (2,-0.2) -- (2,0.2);
		\node [above] at (2,0.2) {\small $1$};
		\draw [thick] (6,-0.2) -- (6,0.2);
		\node [above] at (6,0.2) {\small $3$};

		\node [below] at (-1,2) {$\ZZ$:};
		
		\draw [<-, dashed] (1.3,0) -- (1.3,1.4);
		\node [below] at (1.3,0) {\scriptsize $0.74$};
		\node [above, thick, draw=black] (zm1) at (1.3,1.4) {\small $1x+1$};
		\node [above, thick, draw=black] (zm1proof) at (0.6,2.2) {\scriptsize Collatz proved};
		\draw [->, thick] (zm1proof) -- (zm1);
		
		\draw [<-, dashed] (3.75,0) -- (3.75,1.4);
		\node [above, thick, draw=black] (zm3) at (3.6,1.4) {\small $3x+1$};
		\node [below] at (3.7,0) {\scriptsize $1.87$};
		
		\draw [<-, dashed] (5.00,0) -- (5.00,1.4);
		\node [above, thick, draw=black] (zm5) at (5.00,1.4) {\small $5x+1$};
		\node [below] at (4.8,0) {\scriptsize $2.50$};
		
		\draw [<-, dashed] (5.88,0) -- (5.88,1.4);
		\node [above, thick, draw=black] (zm7) at (6.4,1.4) {\small $7x+1$};
		\node [below] at (5.65,0) {\scriptsize $2.94$};
		
		\node [below] at (-1,-2) {$\FF_2[t]$:};
		
		\draw [<-, dashed] (2,-0.2) -- (2,-1.4);
		\node [below, thick, draw=black] (f2tm1) at (2,-1.4) {\scriptsize $\deg m = 1$};
		\node [below, thick, draw=black] (f2tm1proof) at (1.5,-2.4) {\scriptsize Collatz proved};
		\draw [->, thick] (f2tm1proof) -- (f2tm1);
		
		\draw [<-, dashed] (4,-0.3) -- (4,-1.4);
		\node [below, thick, draw=black] (f2tm2) at (4,-1.4) {\scriptsize $\deg m = 2$};
		
		\draw [<-, dashed] (6,-0.2) -- (6,-1.4);
		\node [below, thick, draw=black] (f2tm3) at (6,-1.4) {\scriptsize $\deg m = 3$};

		\node [below, thick, draw=black] (f2tm23proof) at (4.5,-2.4) {\scriptsize Divergent $f$ exists*};
		\draw [->, dashed, thick] (f2tm23proof) -- (f2tm2);
		\draw [->, dashed, thick] (f2tm23proof) -- (f2tm3);
		
		\node [below, thick, draw=black] (f2tm3proof) at (7.3,-2.4) {\scriptsize $P(\sigma = \infty) > 0$};
		\draw [->, thick] (f2tm3proof) -- (f2tm3);
		
	\end{tikzpicture}
\end{figure*}
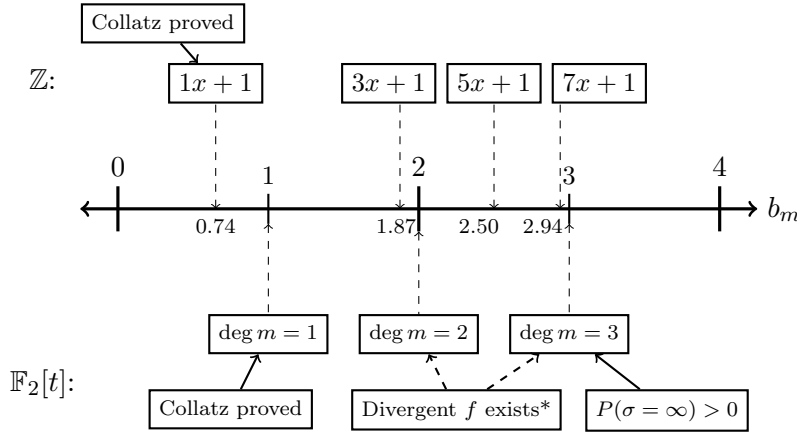

Following this introduction, we first examine $mx+1$ systems in $\FF_2[t]$. After providing some definitions and summarizing past work in this area, we prove a stronger analogue of Terras' theorem on the probability of infinite stopping times. We then present some experimental data on these systems concerning stopping times, cycle lengths, and rate of growth for seemingly-divergent trajectories.

In the second part of the paper, we define a family of similar $mx+1$ dynamical systems over algebraic curves of the form $x^2 + tx + r(t)$. In this case we need to use a more intricate random walk model featuring a Bernoulli random variable with $p=1/4$ instead of $1/2$. We prove a stronger analogue of Terras' theorem in this setting also, and present some experimental data on stopping times.

This work was supported in part by NSA grant H98230-14-1-0307. Professors Carl Pomerance and Jeffrey Lagarias provided helpful comments on an earlier draft of this paper, for which we are very grateful. We also owe thanks to Professor Hans Johnston for his help with our computations. This paper represents one part of the author's dissertation, supervised and guided by Professor Siman Wong.

\section{$3x+1$ analogue in the Ring $\FF_2[t]$}

Let $m \in \FF_2[t]$ be a fixed \textbf{odd} polynomial (meaning $m(0) = 1$). The $mx+1$ map $T : \FF_2[t] \rightarrow \FF_2[t]$ is defined by the formula
\[ T(f) = \left\{ \begin{array}{c l}
\frac{f}{t},& f \equiv 0 \mod{t} \\[0.1in]
\frac{mf+1}{t},& f \not\equiv 0 \mod{t},
\end{array} \right. \]
Iteration of this function defines a discrete dynamical system on $\FF_2[t]$. Given a starting element $f \in \FF_2[t]$, we call the sequence $ \left\{ f, T(f), T^2(f), T^3(f), \ldots \right\}$ the \textbf{trajectory} of $x$. Each trajectory must either become cyclic at some point or else diverge, meaning
$$\lim_{k \rightarrow \infty} \deg T^k(f) = \infty .$$

The \textbf{Collatz conjecture} states that every trajectory of the $3x+1$ dynamical system in $\ZZ$ eventually reaches 1. This implies (among other things) that the only cycle is $\left\{ 1, 2 \right\}$. When we view each element of $\FF_2[t]$ as sequence of binary coefficients, there is a natural set bijection between $\FF_2[t]$ and the ring of nonnegative integers with binary representation. In that sense, the $mx+1$ system in $\FF_2[t]$ can be viewed as a dynamical system on the positive integers similar to the one defined by the original $3x+1$ function. It is natural to consider the analogue of the Collatz conjecture in this setting. That is, for a given polynomial $m \in \FF_2[t]$, does every $mx+1$ trajectory in $\FF_2[t]$ eventually reach 1?

Hicks, Mullen, Yucas, and Zavislak \cite{hicks-polynomial-analogue} were able to prove that for $m = t + 1$, all sequences eventually reach 1. Therefore the conjecture is true when $m = t + 1$. However, for most choices of $m$ we can easily find nontrivial cycles. For example, when $m = t^2 + t + 1$, the trajectory of $f = t^2 + 1$ is
\[ \begin{array}{|c|c|}
\hline
k & T^k(f) \\ \hline
0	&	t^2 + 1	\\
1	&	t^3 + t^2 + 1 \\
2	&	t^4	+ 1 \\
3	&	t^5 + t^4 + t^3 + t + 1	\\
4	&	t^6 + t^4	\\
5	&	t^5 + t^3 	\\
6	&	t^4 + t^2	\\
7	&	t^3 + t	\\
8	&	t^2	+ 1 \\ \hline
\end{array} \]
This sequence repeats with period 8. The existence of this cycle disproves the Collatz conjecture analogue for $m = t^2 + t + 1$.

There are also trajectories which seem very likely to diverge. The trajectory of $f = t^6 + t^2 + t + 1$ does not repeat a value within the first two billion iterations. Figure \ref{fig-f2t-trajectories} in section \ref{sec-f2t-experimental} shows a plot of this trajectory, along with two others that seem to diverge. Matthews and Leigh \cite{matthews-syracuse} were able to exhibit a polynomial with a provably divergent trajectory when $m = t^2 + 1$, and it is easy to apply their construction to all $m$ of the form $t^n + 1$ for even $n \geq 2$. Experimental data confirms our expectation that a higher-degree polynomial $m$ causes a higher rate of apparently-divergent trajectories.

We want to understand the dynamics of $mx+1$ for a given polynomial $m$. Since the Collatz conjecture analogue is likely false for $\deg m > 1$, we instead consider the following two questions:
\begin{enumerate}
	\item Do divergent $mx+1$ trajectories exist? If so, what is the density of divergent trajectories in the set of all trajectories?
	\item Do cyclic trajectories exist? If so, how are cycle lengths distributed?
\end{enumerate}

In order to investigate the first of these questions, we define the \textbf{stopping time} $\sigma(f)$ to be the minimum number of steps before the trajectory of $f$ reaches a polynomial of lower degree than $f$. That is,
$$ \sigma(f) = \inf \left\{ k > 0 : \deg T^k(f) < \deg f \right\} .$$
Note that if $m$ is even (i.e. $m(0) = 0$) then necessarily $\sigma(f) = 1$. If the trajectory of $f$ never reaches a polynomial of lower degree, we set $\sigma(f) = \infty$. Clearly if $\sigma(f) < \infty$ for all $f$, then the Collatz conjecture analogue must be true. On the other hand, if there exists any $f$ with $\sigma(f) = \infty$, then the trajectory of $f$ cannot reach 1 and so the conjecture must be false.

For the integer $3x+1$ problem, Terras \cite{terras} proved the following theorem concerning stopping times. An alternative proof was given soon afterwards by Everett \cite{everett}.
\begin{theorem}\label{th-terras}
	Almost every positive integer has finite $3x+1$ stopping time. That is,
	$$ \lim_{N \rightarrow \infty} P \left( \sigma(x) < \infty \,|\, 0 < x \leq N \right) = 1.$$
\end{theorem}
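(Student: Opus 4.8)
The plan is to reduce the statement to a large-deviation estimate for a fair coin-flip random walk, following the heuristic of the introduction. Write $n_j = T^j(n)$ and record the \emph{parity vector} $Q_k(n) = (n_0 \bmod 2, \ldots, n_{k-1} \bmod 2) \in \{0,1\}^k$. The first step I would carry out is a short induction showing that $Q_k(n)$ depends only on $n \bmod 2^k$ and that the induced map $\ZZ/2^k\ZZ \to \{0,1\}^k$ is a bijection: in the inductive step, the parity of $n$ together with the residue of $T(n)$ modulo $2^{k-1}$ determines $n$ modulo $2^k$ (using that $3$ is invertible mod $2^k$ for the odd case), and the two sides have equal cardinality. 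The upshot is that under the uniform measure on $\ZZ/2^k\ZZ$ the coordinates of $Q_k(n)$ are independent fair Bernoulli variables, which is what makes the IID coin-flip model rigorous.

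Next I would write down the closed form $n_k = (3^{d_k} n + c_k)/2^k$, where $d_k = \sum_{i<k}(n_i \bmod 2)$ is the number of ``tripling'' steps among the first $k$ and $c_k \in \ZZ$ is determined by $Q_k(n)$ alone. The event $\{\sigma(n) > k\}$ means $n_j \ge n$ for all $1 \le j \le k$; rearranging $n_j \ge n$ gives $(3^{d_j} - 2^j)\,n \ge -c_j$. Since $2$ and $3$ are multiplicatively independent, $3^{d_j} = 2^j$ never occurs, so once $n$ exceeds a bound depending only on $k$ this inequality holds if and only if $3^{d_j} > 2^j$, i.e. $d_j > \theta j$ with $\theta = \log 2 / \log 3 \approx 0.6309$. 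Thus, apart from finitely many small $n$, the event $\{\sigma(n) > k\}$ is the union of residue classes mod $2^k$ on which $d_j > \theta j$ for every $j \le k$; in particular it is contained in the classes with $d_k > \theta k$.

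Combining the two steps, the natural density of $\{\sigma(n) > k\}$ is at most the probability that a $\mathrm{Binomial}(k, 1/2)$ variable $S_k$ exceeds $\theta k$. Because $\theta > 1/2$, this is a genuine upper tail, and a Chernoff (or Hoeffding) bound gives $P(S_k > \theta k) \le e^{-2(\theta - 1/2)^2 k} \to 0$. Since $\{\sigma = \infty\} \subseteq \{\sigma > k\}$ for every $k$, the upper density of the infinite-stopping-time set is $0$; dually, $\{\sigma < \infty\} \supseteq \{\sigma \le k\}$, and the latter, being a union of residue classes mod $2^k$ up to finitely many exceptions, has natural density at least $1 - P(S_k > \theta k)$. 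Letting $k \to \infty$ forces the lower density of $\{\sigma < \infty\}$ to be $1$, and hence its natural density to be $1$, giving $\lim_{N\to\infty} P(\sigma(x) < \infty \mid 0 < x \le N) = 1$, as claimed.

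The main obstacle, and the only place where real care is needed, is the magnitude effect hidden in the second step: the comparison $n_j \ge n$ is not literally a congruence on $n$, because of the additive term $c_j$, so $\{\sigma > k\}$ is only \emph{eventually} a union of residue classes. I would need to argue that for each fixed $k$ the finitely many exceptional $n$ — those small enough that $c_j$ reverses one of the inequalities — contribute nothing to the natural density, so that the clean binomial count is indeed the limiting density. The probabilistic heart (the tail bound) is routine; the bookkeeping that lets one pass from ``each fixed $k$'' to the statement about the union over all $k$, while tracking densities rather than merely lower densities, is the delicate part.
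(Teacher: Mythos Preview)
The paper does not actually prove Theorem~\ref{th-terras}; it merely cites Terras and Everett and describes Everett's strategy in one sentence before adapting that strategy to $\FF_2[t]$. Your proposal is precisely the Everett argument, and it parallels the structure of the paper's proof of the $\FF_2[t]$ analogue: a parity-bijection lemma (your first paragraph is the integer counterpart of Lemma~\ref{l-f2t-phi}) followed by a random-walk reduction (your third paragraph plays the role of Theorem~\ref{th-f2t-sigma-m} and Lemma~\ref{l-f2t-ruin}).

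The one genuine point of divergence is the ``magnitude effect'' you flag in the last paragraph. Over $\FF_2[t]$ the identity $\deg T^N(f) = \deg f - N + d\sum_k p_k$ is exact, so the event $\{\sigma(f) > k\}$ is \emph{literally} a union of residue classes modulo $t^k$ and the paper can pass straight to the gambler's-ruin recurrence with no exceptional set. Over $\ZZ$ the additive term $c_j$ prevents this, and you correctly isolate the work needed: for each fixed $k$ the exceptional $n$ are bounded, hence of density zero, so the density of $\{\sigma > k\}$ is exactly the binomial tail. Your choice of a Chernoff bound in place of the paper's recurrence-based ruin computation is a minor simplification that suffices here because you only need the tail to vanish, not its exact value; the paper's polynomial $g_d(z)$ would not apply cleanly anyway since the integer step size $\log_2 3$ is irrational.
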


Everett's proof proceeds by showing that $3x+1$ trajectories are closely modeled by a one-dimensional random walk, and then using the statistical properties of this model. We use a similar method to prove a stronger version of this theorem for $mx+1$ systems in $\FF_2[t]$. Our theorem is stronger in that it gives precise predictions for the density of divergent trajectories.

\begin{theorem} \label{th-f2t-terras}
Let $m \in \FF_2[t]$ with $\deg m = d$ and let $P_m$ be the asymptotic probability that a randomly chosen polynomial in $\FF_2[t]$ has finite $mx+1$ stopping time. That is,
$$ P_m = \lim_{N \rightarrow \infty} P \left( \sigma(f) < \infty | \deg f < N \right) .$$
If $d \leq 2$, then $P_m = 1$. If $d > 2$, then $P_m \in (1/2,1)$ is the unique real root of the polynomial $g_d(z) = z^d - 2z + 1$ inside the unit disk.
\end{theorem}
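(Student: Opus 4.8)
The plan is to turn the random-walk heuristic into an exact finite-window statement by way of a parity-vector bijection. First I would prove, by induction on $k$, that the map sending $f \bmod t^k$ to its length-$k$ parity vector $(\epsilon_0,\dots,\epsilon_{k-1})$ (where $\epsilon_i\in\{0,1\}$ records whether $T^i(f)$ is divisible by $t$) is a bijection $\FF_2[t]/(t^k)\to\{0,1\}^k$. The inductive step uses that $m$ is a unit modulo every power of $t$ (since $m(0)=1$), so $f\mapsto mf+1$ is an affine bijection carrying $\{f:f(0)=1\}$ onto $\{h:h(0)=0\}$; dividing by $t$ then matches the two halves cut out by $\epsilon_0$ with $\FF_2[t]/(t^{k-1})$. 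Since a uniformly random $f$ with $\deg f<N$ reduces to a uniform residue modulo $t^k$ for every $k\le N$, the first $N$ entries of its parity vector are exactly $N$ independent fair coin flips.

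Next I would record the exact degree bookkeeping: a division step lowers the degree by $1$, while an odd step $f\mapsto(mf+1)/t$ raises it by $d-1$ (no cancellation, since $\FF_2[t]$ is a domain and $\deg(mf)>0$). Hence $\deg T^k(f)-\deg f = d\,a_k - k$, where $a_k=\sum_{i<k}\epsilon_i$ counts the odd steps, and $\sigma(f)=\inf\{k:d\,a_k-k<0\}$ is precisely the first-passage time below $0$ of the integer walk $S_k=d\,a_k-k$ that jumps by $+(d-1)$ or $-1$ with equal probability. Its mean step is $(d-2)/2$, which already explains the trichotomy $d\lessgtr 2$. I would then compute $\rho:=P(S_k<0\text{ for some }k)$ by first-step analysis: because down-steps have size exactly $1$, descending $j$ levels means descending one level $j$ times, and by the strong Markov property and translation invariance each single descent succeeds independently with the same probability $\rho$. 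Conditioning on the first step gives $\rho=\tfrac12+\tfrac12\rho^{d}$, i.e. $g_d(\rho)=\rho^d-2\rho+1=0$, with $\rho$ the smallest root in $[0,1]$. Since $g_d(1)=0$, for $d\le 2$ one checks $g_1(z)=1-z$ and $g_2(z)=(z-1)^2$, forcing $\rho=1$; for $d>2$, $g_d(\tfrac12)=2^{-d}>0$ while $g_d'(1)=d-2>0$ makes $g_d<0$ just left of $1$, so the smallest root lies in $(\tfrac12,1)$, and a sign check on $(-1,\tfrac12]$ shows it is the unique real root in the open unit disk.

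The remaining, and hardest, step is to justify that the asymptotic density $P_m=\lim_N P(\sigma<\infty\mid\deg f<N)$ actually equals $\rho$. The lower bound $\liminf_N P_m\ge\rho$ is immediate: for fixed $k$ and all $N\ge k$ the event $\{\sigma\le k\}$ depends only on the first $k$ (uniform) parities, so $P(\sigma<\infty\mid\deg f<N)\ge P(\sigma\le k\mid\deg f<N)=P(S\text{ hits }-1\text{ by time }k)$, and letting $k\to\infty$ gives $\rho$. The upper bound is the crux, and note it is exactly what yields the paper's qualitative claim $P_m<1$ for $d\ge 3$. The difficulty is that a polynomial of degree $<N$ is \emph{determined} by its length-$N$ parity vector, so beyond step $N$ its parities are no longer fresh coin flips and I cannot simply run the i.i.d.\ walk forever. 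The plan is to exploit the positive drift: on $\{\sigma>N\}$ the walk is, with probability tending to $1-\rho$, already at height $\ge\delta N$ at time $N$ for any fixed $\delta<(d-2)/2$ (by the strong law, as this height event depends only on the first $N$ uniform parities), after which a return below $0$ demands an atypically large descent. The main obstacle is controlling these post-$N$ returns without fresh randomness; I would attempt it through a ladder/renewal decomposition combined with the independence of disjoint parity blocks on windows of length $\le N$ (which lets intermediate segments be treated as fresh uniform flips), aiming to bound the late-return probability by a geometric-in-$\delta N$ tail that vanishes as $N\to\infty$, thereby forcing the divergent density up to $1-\rho$ and completing the identification $P_m=\rho$.
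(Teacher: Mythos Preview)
Your parity-vector bijection and the degree bookkeeping match the paper's Lemma~2.3 essentially verbatim. For computing the ruin probability you use first-step analysis and the strong Markov property to get $\rho=\tfrac12+\tfrac12\rho^d$ directly, whereas the paper sets up the bounded gambler's ruin with ceiling $W$, solves the linear recurrence $U_k=\tfrac12 U_{k-1}+\tfrac12 U_{k+d-1}$ via its auxiliary polynomial $g_d$, and lets $W\to\infty$. Both routes land on the same polynomial; yours is shorter and avoids the Cramer's-rule bookkeeping, while the paper's bounded version makes it slightly more transparent why the relevant root is the one of smallest modulus.

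The substantive difference is the last paragraph. You are more scrupulous than the paper here: the paper writes $\deg T^N(f)-\deg f \approx d\sum X_k - N$ and then declares Theorem~2.4 (the exact identification $P(\sigma<\infty)=P(\exists N:\sum X_k<N/d)$) as ``immediate,'' without ever addressing the point you raise---that for $f$ of degree $<N$ the parities beyond step $N$ are no longer fresh coin flips. So the upper bound $\limsup_N P(\sigma<\infty\mid \deg f<N)\le\rho$ for $d\ge 3$ is simply not argued in the paper. You have correctly located a genuine gap that the paper leaves open.

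That said, your proposed patch is not yet a proof. The ``independence of disjoint parity blocks on windows of length $\le N$'' is not available in the form you need: once you condition on $\deg f<N$ there are exactly $N$ random bits in total, and the post-$N$ trajectory is a deterministic function of them, so you cannot manufacture additional independent windows. A drift-plus-large-deviations bound on the \emph{first} $N$ steps does give you that $S_N\ge\delta N$ on most of $\{\sigma>N\}$, but to finish you must show that the deterministic continuation from $T^N(f)$ cannot descend by $\delta N$---and nothing in the bijection controls the parity sequence of that specific polynomial $h_{N-1}$. One workable line is to pass to $\FF_2[[t]]$ with Haar measure, where the bijection extends to a measure-preserving map to $\{0,1\}^{\mathbb N}$ and the identity $P_{\text{Haar}}(\sigma<\infty)=\rho$ is exact; then argue that the cylinder event $\{\sigma\le K\}$ has the same density over degree-$<N$ polynomials as its Haar measure for every $N\ge K$, and control $P(N<\sigma<\infty\mid\deg f<N)$ separately. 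But as written, your renewal sketch does not close the gap.
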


\subsection{Proof of Terras' theorem analogue in $\FF_2[t]$}
\label{sec-proof-f2t}

First, we define the \textbf{parity sequence} of $f$ to be $\left\{ p_0, p_1, p_2, \ldots \right\}$, where $p_k = (T^k(f))(0)$. That is, $p_k$ is the constant term of $T^k(f)$, which indicates whether $tT^{k+1}(f) = mT^k(f) + 1$ or $tT^{k+1}(f) = T^k(f)$. To prove Theorem \ref{th-f2t-terras}, we follow the outline used by Everett \cite{everett} to prove the corresponding result for the $3x+1$ system in $\ZZ$. We prove that the parity sequence of a uniformly-chosen polynomial in $\FF_2[t]$ is uniformly distributed in the set of sequences in $\left\{0,1\right\}$. Then we prove that almost all such sequences correspond to polynomials with finite stopping time.

If we want to find the first $N$ terms of the parity sequence of a polynomial $f \in \FF_2[t]$, we only need to consider the lowest $N$ coefficients of $f$. The higher coefficients will have no effect until later in the sequence. In fact, the parity sequences of all polynomials in the set $\left\{ g + t^N q : q \in \FF_2[t] \right\}$ must have the same first $N$ terms. Therefore, there is a well-defined set function
$$\Phi_m : \FF_2[t]/t^N \longrightarrow \left\{ 0, 1 \right\}^N $$
which maps each element of $\FF_2[t]/t^N$ to the first $N$ terms of its $m$-parity sequence. We claim that this function is one-to-one.
\begin{lemma} \label{l-f2t-phi}
The map $\Phi_m$ described above is a set bijection. That is, every sequence $\left\{ p_0, p_1, \ldots, p_{N-1} \right\}$ with $p_i \in \left\{ 0, 1 \right\}$ is the first $N$ terms of the parity sequence of a unique polynomial $f \in \FF_2[t]$ with $ \deg f < N$. Specifically, the parity sequence determines the initial polynomial $f$ and its $N$-th iterate $T^N(f)$ as follows, up to choice of $q_N$:
\begin{align*}
f &= g_{N-1} + t^N q_N ,& \deg g_{N-1} < N \\
T^N(f) &= h_{N-1} + m^{s(N)} q_N ,& \deg h_{N-1} < ds(N)
\end{align*}
where $d = \deg m$ and $s(N) = \sum_{i=0}^{N-1} p_i$. Therefore, parity sequences of polynomials in $\FF_2[t]$ of degree $< N$ are distributed uniformly in $\left\{0,1\right\}^N$.
\end{lemma}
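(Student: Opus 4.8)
The plan is to establish the two displayed identities for $f$ and $T^N(f)$ by induction on $N$, and then obtain both the bijectivity of $\Phi_m$ and the uniform-distribution claim as formal consequences. The hypothesis $m(0) = 1$ is the engine of the argument: it is what forces the numerators produced by $T$ to be divisible by $t$ and what keeps the constant-term bookkeeping inside $\FF_2$ tractable. Note first that $\FF_2[t]/t^N$ and $\left\{0,1\right\}^N$ are finite sets of the same cardinality $2^N$; hence, once the formula $f = g_{N-1} + t^N q_N$ exhibits $g_{N-1}$ (the reduction $f \bmod t^N$) as uniquely determined by $p_0,\dots,p_{N-1}$, the map $\Phi_m$ is injective, and an injection between equinumerous finite sets is a bijection.

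For the induction I would carry \emph{both} identities at once, regarding the pair $(g_{N-1},h_{N-1})$ as determined data and $q_N$ as a free higher-order tail. The base case $N=0$ is immediate: $g_{-1}=h_{-1}=0$, $s(0)=0$, and $f = T^0(f) = q_0$. For the inductive step, assume the formula for $N$ and split the tail as $q_N = c + t\,q_{N+1}$ with $c = q_N(0)\in\FF_2$. Because $m(0)=1$, the next parity is $p_N = (T^N(f))(0) = h_{N-1}(0) + c$, so prescribing $p_N$ forces $c = p_N + h_{N-1}(0)$; this produces $g_N = g_{N-1} + c\,t^N$ with $\deg g_N < N+1$, again uniquely determined by $p_0,\dots,p_N$. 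Applying $T$ to $T^N(f) = h_{N-1} + c\,m^{s(N)} + t\,m^{s(N)}q_{N+1}$ in the two cases yields $T^{N+1}(f) = (h_{N-1}+c\,m^{s(N)})/t + m^{s(N)}q_{N+1}$ when $p_N=0$, and $T^{N+1}(f) = (m h_{N-1} + c\,m^{s(N)+1} + 1)/t + m^{s(N)+1}q_{N+1}$ when $p_N=1$. In each case the relation $h_{N-1}(0)+c = p_N$ together with $m(0)=1$ makes the constant term of the numerator vanish, so division by $t$ returns a polynomial $h_N$; and since $s(N+1)=s(N)+p_N$, the surviving power of $m$ is exactly $m^{s(N+1)}$.

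The step requiring genuine care, which I expect to be the main obstacle, is the degree bound $\deg h_N < d\,s(N+1)$. The inductive hypothesis only supplies $\deg h_{N-1} \le d\,s(N)-1$. In the case $p_N=1$ the product $m h_{N-1}$ then has degree at most $d\,s(N)+d-1 = d\,s(N+1)-1$, while $c\,m^{s(N)+1}$ has degree exactly $d\,s(N+1)$ when $c=1$; the numerator thus has degree at most $d\,s(N+1)$, and dividing by $t$ gives $\deg h_N \le d\,s(N+1)-1$. The case $p_N=0$ is analogous but slacker, giving $\deg h_N \le d\,s(N)-1 < d\,s(N+1)$. Verifying that these bounds stay sharp across both cases, and confirming at each stage that $c$ is forced rather than free, is the delicate bookkeeping of the write-up.

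With the formula in hand, the bijectivity of $\Phi_m$ follows as noted above, and the distribution claim is then immediate: a polynomial drawn uniformly from the $2^N$ residues of $\FF_2[t]/t^N$ has its length-$N$ parity prefix equal to the image under a bijection of a uniform variable, hence uniformly distributed on $\left\{0,1\right\}^N$, exactly as asserted.
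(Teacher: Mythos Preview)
Your proof is correct and follows essentially the same inductive strategy as the paper's, with the minor organizational difference that you parameterize by $c=q_N(0)$ and derive $c=p_N+h_{N-1}(0)$ up front, collapsing the paper's four cases $(h_{N-1}(0),p_N)\in\{0,1\}^2$ into two cases indexed by $p_N$ alone. Your treatment of the degree bound $\deg h_N < d\,s(N+1)$ is in fact more explicit than the paper's main text, which handles only one of the four cases and defers the rest to a supplement.
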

Note that $s(N)$ is just the number of $1$'s which appear in the first $N$ terms of the parity sequence of $f$, which is the number of multiplications that occur in the first $N$ steps of the trajectory of $f$.

First, an informal explanation. Suppose we know the first term $p_0$ of the parity sequence of $f$. Using this, we can determine whether $f$ is `odd' or `even'. That is, we can find $f$ modulo $t$. If we also know $p_1$, we can `lift' our knowledge of $f$, obtaining $f$ modulo $t^2$. We also learn the parity of $f_1$. If we know $p_2$, we gain one more degree of precision in $f$ and $T(f)$, and additionally we learn the parity of $T^2(f)$. More generally, if we know $f$ modulo $t^{k+1}$ and we know $p_k$, we can perform a sort of lift and find the value of $f$ modulo $t^{k+2}$, and we also learn a bit more about $f_{k+1}$. In effect, there is an algorithm which constructs the unique polynomial of degree $< N$ with a given parity sequence $\left\{ p_0, p_1, \ldots, p_{N-1} \right\}$. To prove the theorem, we just need to describe this algorithm and verify that it works.

\begin{proof}
We proceed by induction on $N = 1,2,\ldots$. The base case is $N = 1$. If $p_0 = 0$, then $f = tq_1$ and so $T(f) = q_1$. If $p_0 = 1$, then $f = 1 + tq_1$ and $T(f) = (m+1)/t + mq_1$.

Now assume the theorem is true for all values up to $N$. We argue that it is true for $N+1$.  There are four cases to consider, depending on the values of $h_{N-1}(0)$ and $p_N$ in $\left\{0,1\right\}$. For instance, suppose $h_{N-1}(0) = p_N = 0$. That is, the $N$-th term of the trajectory is `even' and $q_N$ is also even. Let $q_{N} = tq_{N+1}$. Then the next term is
\begin{align*}
f_{N+1} &= \frac{ f_N }{ t } = \frac{ h_{N-1} + m^{s(N)}q_N }{ t } \\
&= \frac{ h_{N-1} }{ t } + m^{s(N+1)} q_{N+1}
\end{align*}
We can rewrite the initial polynomial as
$$ f = g_{N-1} + t^{N+1}q_{N+1} .$$
Since $\deg h_{N-1}/t < ds(N)$ and $\deg g_{N-1} < N+1$, the theorem holds in this case. The other three cases are extremely similar.\footnote{A complete proof of all four cases is given in a supplemental document available on the author's website.}

This proves that $f$ modulo $t^N$ together with the parity sequence term $p_N$ is sufficient to uniquely identify $f$ modulo $t^{N+1}$. Therefore, every length-$N$ parity sequence must arise from some polynomial in $\FF_2[t]/t^N$. There are $2^N$ polynomials of degree $< N$, and there are $2^N$ binary sequences of length $N$. So by cardinality, the surjective map $\Phi_m : \FF_2[t]/t^N \rightarrow \left\{ 0,1 \right\}^N$ is a set bijection.
\end{proof}

We have shown that the parity sequence of a randomly chosen polynomial $f \in \FF_2[t]$ of degree less than $M$ is distributed uniformly in $\left\{0,1\right\}^N$. Now we describe how the parity sequence of $f$ determines the degree of $T^N(f)$. If the parity sequence of $f$ is $\left\{ p_k \right\}$, then
$$ \deg T^N(f) = \deg f - N + d\sum_{k=0}^{N-1} p_k .$$

Since $\left\{ p_k \right\}_{k=0}^{N-1}$ is uniformly distributed in $\left\{0,1\right\}^N$, we can write
$$ \deg T^N(f) - \deg f \approx d\sum_{k=0}^{N-1}X_k - N $$
where $X_k$ are IID uniform Bernoulli random variables. This leads immediately to the following theorem:
\begin{theorem}\label{th-f2t-sigma-m}
The probability that a randomly chosen $f \in \FF_2[t]$ has finite $mx+1$ stopping time is
\begin{align} \label{p-sigma-inf} P(\sigma(f) < \infty) = P \left( \exists N > 0 : \sum_{k=0}^{N-1} X_k < \frac{1}{d}N \right)
\end{align}
where $X_i$ are IID uniform Bernoulli random variables and $d = \deg m$.
\end{theorem}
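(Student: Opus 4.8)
The plan is to read the theorem off directly from the degree identity established just above the statement together with the uniform distribution of parity sequences from Lemma~\ref{l-f2t-phi}. The author's phrase ``leads immediately'' is apt: there is no new computation to do, so the work lies entirely in making the event-equivalence precise and in justifying the passage to the infinite random-walk model.

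First I would rewrite the stopping-time condition as a condition on partial sums of the parity sequence. Using $\deg T^N(f) = \deg f - N + d\sum_{k=0}^{N-1} p_k$, the inequality $\deg T^N(f) < \deg f$ is equivalent to $\sum_{k=0}^{N-1} p_k < N/d$. Writing $E_N = \left\{ \deg T^N(f) < \deg f \right\}$, the definition of $\sigma$ then yields
$$ \left\{ \sigma(f) < \infty \right\} = \bigcup_{N \geq 1} E_N = \left\{ \exists N > 0 : \sum_{k=0}^{N-1} p_k < \tfrac{1}{d}N \right\}. $$
The crucial structural point is that each $E_N$ depends only on the first $N$ parity terms $p_0, \ldots, p_{N-1}$, so it is a finite-depth cylinder event in the parity sequence.

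Next I would transfer the probability from polynomials to the random-walk model. By Lemma~\ref{l-f2t-phi}, for every $N$ the first $N$ parity terms of a uniformly chosen polynomial of degree $< M$ (for any $M \geq N$) are uniformly distributed on $\left\{ 0,1 \right\}^N$, i.e.\ distributed as $X_0, \ldots, X_{N-1}$. These finite uniform laws are mutually consistent under truncation, so by the Kolmogorov extension theorem they assemble into the product Bernoulli$(1/2)$ measure on $\left\{ 0,1 \right\}^{\mathbb{N}}$, under which the coordinate maps are precisely the IID variables $X_k$. Since $\left\{ \sigma(f) < \infty \right\}$ is a countable union of the cylinder events $E_N$, continuity from below of this measure gives $P(\sigma(f) < \infty) = P\bigl( \exists N > 0 : \sum_{k=0}^{N-1} X_k < N/d \bigr)$, which is \eqref{p-sigma-inf}.

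The main obstacle is this last limiting step: I must verify that the ``asymptotic probability over polynomials'' in the sense of Theorem~\ref{th-f2t-terras} genuinely coincides with the product-measure probability, because the event $\bigcup_N E_N$ involves arbitrarily many parity terms while any single polynomial of degree $< M$ has only its first $M$ terms controlled by the Lemma. The clean way around this is to exploit the monotone structure. The truncated events $\bigcup_{N \leq L} E_N$ are increasing in $L$ and each depends on only finitely many parity terms, so I would first establish $\lim_{M \to \infty} P\bigl( \bigcup_{N \leq L} E_N \mid \deg f < M \bigr) = P\bigl( \bigcup_{N \leq L} E_N \bigr)$ under the IID model for fixed $L$ using the Lemma, and then let $L \to \infty$, interchanging the two limits by monotone convergence. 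Everything remaining is bookkeeping.
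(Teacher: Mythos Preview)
Your approach is correct and is essentially the same as the paper's: you invoke the degree identity together with Lemma~\ref{l-f2t-phi} to identify the stopping-time event with the random-walk event, exactly as the paper does in the paragraph preceding the theorem. The only difference is that you spell out the monotone-limit/Kolmogorov-extension justification for the passage from finite parity vectors to the infinite IID model, whereas the paper simply asserts that the theorem ``follows immediately'' and leaves this step implicit.
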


We will now show that this probability is the root of a certain simple polynomial which depends only on $d = \deg m$, thus proving Theorem \ref{th-f2t-terras}.
\begin{lemma} \label{l-f2t-ruin}
For $k = 0, \ldots, N-1$, let $X_k$ be IID uniform Bernoulli variables and let $P_d$ be defined
$$ P_d = P \left( \exists N > 0 : \sum_{k=0}^{N-1} X_k < \frac{1}{d}N \right) .$$
Then $P_1 = P_2 = 1$, and for $d > 2$, $P_d$ is the unique real root of the polynomial $g_d(z) = z^d - 2z + 1$ lying inside the unit disk.
\end{lemma}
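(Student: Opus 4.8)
The plan is to reinterpret the inequality $\sum_{k=0}^{N-1} X_k < N/d$ as a statement about a one-dimensional random walk with asymmetric, integer-valued steps, and then to compute the resulting first-passage probability by a first-step (renewal) argument. Concretely, I would set $Y_k = dX_k - 1$, so that each $Y_k$ equals $d-1$ with probability $1/2$ and $-1$ with probability $1/2$, and let $W_N = \sum_{k=0}^{N-1} Y_k$ be the associated walk started at $W_0 = 0$. Since $W_N = d\sum_{k=0}^{N-1}X_k - N$, the condition $\sum_{k=0}^{N-1} X_k < N/d$ holds exactly when $W_N < 0$, so $P_d = P(\exists N>0 : W_N < 0)$, and the mean step size is $(d-2)/2$.

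The key structural feature I would exploit is that every downward step of $W$ has size exactly $-1$, while every upward step has size $d-1$. Consequently the walk can cross from the nonnegative integers into the negative integers only by stepping from $0$ to $-1$, so the event $\{W_N < 0 \text{ for some } N\}$ coincides with the event that $W$ ever hits $-1$. Writing $q = P_d$ for the probability that the walk ever descends one full level below its starting point, translation invariance and the strong Markov property show that the probability of descending $m$ levels is $q^{m}$, since each unit descent is an independent copy of the same first-passage event. A first-step analysis then yields the recursion
\[ q = \tfrac{1}{2}\cdot 1 + \tfrac{1}{2}\, q^{d}, \]
because with probability $1/2$ the walk reaches $-1$ immediately, while otherwise it jumps to $d-1$ and must descend $d$ levels to reach $-1$. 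Rearranging gives $g_d(q) = q^d - 2q + 1 = 0$.

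It then remains to locate $q$ among the roots of $g_d$, and a short calculus argument handles the polynomial. Since $g_d(1)=0$ for every $d$, for $d>2$ the factorization $g_d(z)=(z-1)(z^{d-1}+\cdots+z-1)$, together with the sign data $g_d(0)=1>0$ and $g_d(1/2)=2^{-d}>0$ and the fact that $g_d$ has a single critical point $z_\ast=(2/d)^{1/(d-1)}$ on $(0,\infty)$, shows that $g_d$ has exactly one root $\rho$ in $(0,1)$, that $\rho\in(1/2,1)$, and that $g_d>0$ on $(-1,0]$; hence $\rho$ is the unique real root of $g_d$ strictly inside the unit disk. For $d\le 2$ the recursion already forces $q=1$ (the only root of $g_d$ in $[0,1]$), giving $P_1=P_2=1$; here I would also record the direct probabilistic reasons, namely that for $d=1$ the walk is nonincreasing and strictly decreases infinitely often, while for $d=2$ it is the recurrent simple symmetric walk.

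The step that needs the most care — and the main obstacle — is selecting the correct root when $d>2$, where $g_d$ has both $\rho\in(1/2,1)$ and $z=1$ as roots in $[0,1]$. I would rule out $q=1$ using the positive drift: since the mean step $(d-2)/2$ is positive, the strong law of large numbers gives $W_N \to +\infty$ almost surely. If $q$ were equal to $1$, then translation invariance and the strong Markov property would force the walk to descend every level almost surely, so $\liminf_N W_N = -\infty$ almost surely, contradicting $W_N\to+\infty$. Hence $q<1$, and therefore $q=\rho$, which completes the identification of $P_d$ with the unique real root of $g_d$ inside the unit disk.
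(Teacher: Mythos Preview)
Your argument is correct and takes a genuinely different route from the paper. The paper truncates the game with an absorbing upper barrier at $W$, writes the full recurrence $U_k=\tfrac12 U_{k-1}+\tfrac12 U_{k+d-1}$ for the ruin probability from each starting fortune $k$, solves the resulting $d\times d$ linear system in the coefficients of $U_k=\sum_j c_j\lambda_j^k$ via Cramer's rule, and then lets $W\to\infty$, so the answer emerges as the root of smallest modulus of the auxiliary polynomial (with Descartes' rule and Rouch\'e's theorem invoked to show that root is real and lies in $(1/2,1)$). You instead exploit the special structure that every downward step has size exactly $-1$: this forces the first negative value to be $-1$, makes successive unit descents independent with common probability $q$, and collapses the whole boundary-value problem to the single equation $q=\tfrac12+\tfrac12 q^d$; you then select the correct root directly by a drift/SLLN argument rather than via a limiting procedure. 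Your approach is shorter and more probabilistic, avoiding the truncation and the linear algebra entirely; the paper's approach, while heavier, generalizes more mechanically to variants where downward steps are not all of size $1$ (so that the multiplicativity $q^m$ would fail).
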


This is a version of the familiar `gambler's ruin' problem which has been studied extensively. Suppose you start with \$0 and repeatedly play a simple game. Each time you play, you either gain $\$(d-1)$ or lose \$1, each with probability 1/2. The question we seek to answer is this: what is the probability that you will ever have less than \$0 at the conclusion of a game? If the gambler ever drops below \$0, we say that he or she is `ruined'. For a thorough analysis of this problem, see Ethier \cite{ethier}.

\begin{proof} First, note that if $d=1$, each time the game is played, the gambler either loses \$1 or stays even. The only way for the gambler to never drop below his or her initial value is to never lose at all, so the probability of avoiding ruin through the first $N$ games is $2^{-N}$. Clearly in this case the probability of ruin is 1.

In order to handle degrees $d > 1$, we must start with a simplified version of the problem where the gambler is said to `win' if he or she ever reaches a value of at least $\$W$. In this version, the sequence of games ends either when the gambler is ruined (by reaching a value below $\$0$) or wins (by holding a value of at least $\$W$). It is easy to see that the game must end eventually (with either a win or a loss) with probability 1. If the gambler plays enough games, he or she can expect to eventually see every finite subsequence of wins and losses, including $W$ wins in a row (which certainly wins the game, regardless of previous events) and $W$ losses in a row (which certainly loses). Therefore the probability of playing the game forever is zero; eventually the gambler will win or lose. We label $P_{d,W}$ the probability of ruin in a game with upper limit $W$. The probability of ruin in an open-ended game with no upper limit is then $P_d = \lim_{W \rightarrow \infty} P_{d,W}$.

For $k$ in $\ZZ$, let $U_k$ be the probability of ruin (before reaching $\$W$) starting from a value of $\$k$. The value we are trying to compute is $P_{d,W} = U_0$. Clearly $U_k = 1$ for all $k \leq -1$, and $U_k = 0$ for all $k \geq W$. For all other $k$, the values of $U_k$ satisfy a simple recurrence relation:
$$ U_k = \frac{1}{2}U_{k-1} + \frac{1}{2}U_{k+d-1} .$$
The auxiliary polynomial is $g_d(z) = z^d - 2z + 1$. When $d > 2$, this polynomial is separable. But when $d = 2$, the polynomial has a root of multiplicity 2 at $z = 1$, so this must be handled differently. 

First, consider the case $d=2$. In this case, $U_k$ must have the form $U_k = c_1 + c_2k $ for some constants $c_j$. We want to calculate $P_{2,W} = U_0 = c_1$, which we can do by solving a linear system of 2 equations:
\[ \left[ \begin{array}{c c}
1 & -1 \\
1 & W
\end{array} \right] \left[ \begin{array}{c}
c_1 \\ c_2
\end{array} \right] = \left[ \begin{array}{c}
1 \\ 0
\end{array} \right] .\]
We can easily invert the matrix and obtain $P_{2,W} = U_0 = c_1 = \frac{W}{W+1}$. Therefore, the probability of ruin in a game with no upper limit is $P_2 = \lim_{W \rightarrow \infty} U_0 = 1$.

We now move to the case $d > 2$, in which $g_d(z)$ has a root at $\lambda_1 = 1$ and $d-1$ other distinct roots $\lambda_2, \lambda_3, \ldots, \lambda_{d}$. All solutions to the recurrence equation have the form $U_k = \sum_{j=1}^d c_j\lambda_j^k$ for some constants $c_j$. Using the known conditions $U_{-1} = 1$ and $U_W = U_{W+1} = \ldots = U_{W+d-1} = 0$, we can find the needed values of $c_j$ by solving the linear system shown in Figure \ref{fig-f2t-matrix}.

\begin{figure*} \label{fig-f2t-matrix}
\caption{In the $\FF_2[t]$ game, the probability of ruin before reaching a value of $W$ is $\sum_{j=1}^d c_j$.}
\[ \left[ \begin{array}{c c c c c}
\lambda_1^{-1} & \lambda_2^{-1} & \lambda_3^{-1} & \cdots & \lambda_d^{-1} \\
\lambda_1^{W} & \lambda_2^{W} & \lambda_3^{W} & \cdots & \lambda_d^{W} \\
\lambda_1^{W+1} & \lambda_2^{W+1} & \lambda_3^{W+1} & \cdots & \lambda_d^{W+1} \\
\vdots & \vdots & \vdots & & \vdots \\
\lambda_1^{W+d-1} & \lambda_2^{W+d-1} & \lambda_3^{W+d-1} & \cdots & \lambda_d^{W+d-1}
\end{array} \right] \left[ \begin{array}{c}
c_1 \\ c_2 \\ c_3 \\ \vdots \\ c_d
\end{array} \right] = \left[ \begin{array}{c}
1 \\ 0 \\ 0 \\ \vdots \\ 0
\end{array} \right] .\]
\end{figure*}

We then solve this system using Cramer's rule and find the probability of ruin as a function of $W$:
$$ P_{d,W} = U_0 = \sum_{j=1}^d c_j = \frac{ \sum_{j=1}^d (-1)^{1+j} B_j \lambda_j^{-W} }{ \sum_{j=1}^d (-1)^{1+j} B_j \lambda_j^{-1-W} }. $$
The true probability of ruin $P_d$ is the limit of this quantity as $W$ approaches infinity. The dominant term in both the numerator and denominator is the root $\lambda_l$ with the smallest magnitude among the roots of $g_d(z) = z^d - 2z + 1$, assuming there exists a real root inside the unit circle. In fact, it is easy to show\footnote{Details of this proof are given in a supplemental document available on the author's website.} (using Descartes' rule of signs and Rouche's theorem) that $g_d(z)$ must have exactly one root inside the unit circle, and that this root is real and lies in the interval $(1/2,1)$. The value of this root is the probability of ruin $P_d$. \end{proof}

We have now completed the proof of Theorem \ref{th-f2t-terras}. Figure \ref{fig-f2t-psigmatable} shows the values of $P_d$ for $d$ up to 8, accurate to 4 decimal places. Lastly, we prove two simple corollaries following Theorem \ref{th-f2t-terras}.

\begin{figure}
\caption{Finite stopping time probability $P_d$ in $\FF_2[t]$} \label{fig-f2t-psigmatable}
\centering
\[ \begin{array}{|c|c|c|c|c|c|c|c|c|} \hline
d & 1 & 2 & 3 & 4 & 5 & 6 & 7 & 8 \\ \hline
P_d & 1 & 1 & 0.6180 & 0.5437 & 0.5188 & 0.5087 & 0.5041 & 0.5020 \\ \hline
\end{array} \]
\end{figure}

\begin{theorem} \label{cor-f2t-pzero}
If $\deg m \leq 2$, then with the probability that a randomly chosen polynomial will have a divergent $mx+1$ trajectory is zero.
\end{theorem}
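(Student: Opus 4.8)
The plan is to reduce the statement to a recurrence property of the associated random walk. From the degree bookkeeping preceding Theorem~\ref{th-f2t-sigma-m}, every $f$ satisfies the exact identity $\deg T^N(f) = \deg f - N + d\,s(N)$ with $s(N)=\sum_{k=0}^{N-1}p_k$ and $d=\deg m$. Setting $W_N = d\,s(N)-N$, a trajectory diverges exactly when $\deg T^N(f)\to\infty$, i.e. exactly when $W_N\to+\infty$. (For a genuine polynomial the degree is bounded below by $0$, so if it does not tend to infinity the trajectory revisits a bounded set infinitely often and must eventually cycle; hence unbounded degree is the same as divergence.) By Lemma~\ref{l-f2t-phi} the parity sequence of a uniformly chosen polynomial is modeled by fair independent coin flips $X_k$, so computing the density of divergent trajectories amounts to computing the probability that $W_N = d\sum_{k<N}X_k - N$ tends to $+\infty$.

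For $d\le 2$ this probability is zero by the elementary dichotomy for one-dimensional walks, and I would first record this. When $d=1$ the increments $X_k-1$ have negative mean, so by the strong law $W_N\to-\infty$ almost surely and $\{W_N\to+\infty\}$ is null. When $d=2$ the increments $2X_k-1\in\{\pm1\}$ have mean $0$, so $W_N$ is a simple symmetric random walk; such a walk is recurrent, whence $\liminf_N W_N=-\infty$ almost surely and again $\{W_N\to+\infty\}$ is null. To tie this to the already-proved theorem rather than to the walk alone, I would also use the containment $D\subseteq\bigcup_{k\ge0}T^{-k}(S)$, where $D$ is the divergent set and $S=\{\sigma=\infty\}$: if $f$ diverges, its minimum-degree iterate $T^{j}(f)$ (the minimum is attained because the degree tends to infinity) has no later iterate of strictly smaller degree, so $\sigma(T^{j}(f))=\infty$. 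Theorem~\ref{th-f2t-terras} then gives $P_m=1$ for $d\le2$, so $S$ has measure $1-P_m=0$.

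The step I expect to be the main obstacle is the transfer from these walk and measure statements back to an honest density of polynomials, since natural density is only finitely additive and a countable union of density-zero sets need not have density zero; this is exactly why the naive slogan ``almost every polynomial descends, hence almost every trajectory descends forever'' does not close. I would resolve this by working with the Haar probability measure $\mu$ on the completion $\FF_2[[t]]$, to which $T$ extends because $m$ is a unit there. The key facts are that $T$ is exactly two-to-one, each $g$ having an even preimage $tg$ and an odd preimage $(tg+1)m^{-1}$, and that each inverse branch scales $\mu$ by $\tfrac12$ onto a half-measure set, so $\mu(T^{-1}A)=\mu(A)$ and $T$ is measure preserving. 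Because the stopping-time and divergence events are determined by the parity sequence, they are $\mu$-measurable cylinder and tail events, and $\mu$ agrees with density on the cylinders, so $\mu(S)=\lim_N(\text{density of }\{\sigma>N\})=0$. Measure preservation gives $\mu(T^{-k}(S))=0$ for every $k$, and now countable subadditivity, legitimate for the genuine measure $\mu$, yields $\mu(D)=0$; that is, the divergent set has density zero.
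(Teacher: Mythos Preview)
Your argument is correct and in fact more carefully laid out than the paper's own proof, but it follows a genuinely different route. The paper argues by repeated descent and pigeonhole: starting from a random $f$ of degree $D$, it invokes Theorem~\ref{th-f2t-terras} to say that with probability~$1$ the trajectory returns to degree $\le D$, then again, then again, and after $2^{D+1}$ such returns the trajectory must revisit a polynomial and hence cycle. You instead translate divergence into the tail event $\{W_N\to+\infty\}$ for the associated walk, dispose of it directly via the law of large numbers ($d=1$) and recurrence of the simple symmetric walk ($d=2$), and then---recognizing that density is only finitely additive---pass to Haar measure on $\FF_2[[t]]$, verify that the extended $T$ is measure preserving, and use $D\subseteq\bigcup_k T^{-k}(S)$ together with countable subadditivity of $\mu$.

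Two remarks. First, your containment $D\subseteq\bigcup_k T^{-k}(S)$ is exactly the content of the paper's pigeonhole step once unwound: the ``last return'' index plays the role of your minimum-degree index $j$. So the second half of your argument is really a rigorous reformulation of what the paper does informally; what you add is the explicit measure-theoretic scaffolding (extension of $T$ to $\FF_2[[t]]$, the two inverse branches, invariance of $\mu$) that the paper's repeated ``with probability~$1$'' leaves implicit. Second, your closing sentence ``$\mu(D)=0$; that is, the divergent set has density zero'' is the one place where you should be a bit more careful: $\mu$-null does not automatically imply natural-density zero (e.g.\ $\FF_2[t]$ itself is $\mu$-null). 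What you actually get is that the divergent parity sequences form a $\mu$-null set, which is the natural probabilistic reading of the theorem and is also all that the paper's proof delivers; if one insists on honest natural density, an extra approximation by the cylinders $\{\sigma>M\}$ is needed, and neither proof supplies it.
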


\begin{proof}
Let $N = 2^{1+\deg f}$. This is the number of elements of $\FF_2[t]$ of degree $\leq \deg f$. Let $S_0 = \left\{ f \right\}$. With probability 1, there is some $k_1 > 0$ such that $\deg T^{k_1}(f) \leq \deg f$. Without loss of generality, let $k_1$ be the lowest index which satisfies this condition. If $T^{k_1}(f) \in S_0$, then we have returned to a previously visited polynomial and therefore we have found a cycle; otherwise, let $S_1 = S_0 \cup \{ T^{k_1}(f) \}$.

Now with probability 1 there is some minimal $k_2 > k_1$ such that $\deg T^{k_2}(f) \leq \deg T^{k_1}(f) \leq \deg f$. If $T^{k_2}(f) \in S_1$, then we have found a cycle. If not, let $S_2 = S_1 \cup \{ T^{k-2}(f) \}$.

When we iterate the process described above $N$ times, either we find a cycle, or $S_N$ contains every polynomial of degree $\leq \deg f$ (by cardinality). Now with probability 1 there exists $k_{N+1}$ such that $\deg T^{k_{N+1}}(f) \leq \deg f$. This polynomial must have already been visited by the sequence, so this trajectory is a cycle.
\end{proof}

\begin{theorem} \label{cor-f2t-bigsigma}
For any positive integer $N$, we can find a polynomial $f \in \FF_2[t]$ such that $\deg T^k(f) \geq \deg f$ for all $0 < k \leq N$. That is, for any value of $N$, we can find a polynomial whose stopping time is at least $N$.
\end{theorem}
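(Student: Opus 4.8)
The plan is to reduce the statement to a claim about parity sequences and then to invoke the bijection of Lemma \ref{l-f2t-phi} to realize a convenient one. The starting point is the exact degree recursion for a single application of $T$: because we work in characteristic $2$, when $T^k(f)$ is odd the leading term of $m\,T^k(f)+1$ is simply the product of the leading terms of $m$ and $T^k(f)$ and undergoes no cancellation, so $\deg T^{k+1}(f) = \deg T^k(f) + (d-1)$, while when $T^k(f)$ is even we have $\deg T^{k+1}(f) = \deg T^k(f) - 1$. Writing $s(k) = \sum_{i=0}^{k-1} p_i$ for the number of odd steps among the first $k$, iterating this yields the exact identity
\[ \deg T^k(f) = \deg f - k + d\,s(k), \]
which holds for every $k \geq 0$ provided $f \neq 0$ (one checks that a nonzero polynomial never maps to $0$ under $T$, so no step degenerates).

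First I would observe that $\deg T^k(f) \geq \deg f$ is equivalent to $d\,s(k) \geq k$, that is, to $s(k) \geq k/d$. It therefore suffices to exhibit a polynomial whose parity sequence begins with terms $p_0, \ldots, p_{N-1}$ satisfying $s(k) \geq k/d$ for every $0 < k \leq N$. The cleanest choice is the all-ones sequence $p_0 = p_1 = \cdots = p_{N-1} = 1$, for which $s(k) = k$ and the inequality $s(k) \geq k/d$ holds trivially for all $d \geq 1$.

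The existence of a polynomial realizing this prescribed sequence is exactly what Lemma \ref{l-f2t-phi} provides: since $\Phi_m$ is a bijection, there is a unique $f \in \FF_2[t]$ with $\deg f < N$ whose $m$-parity sequence starts with $N$ consecutive ones. For this $f$ the degree identity gives $\deg T^k(f) = \deg f + (d-1)k$ for all $0 < k \leq N$, which is $\geq \deg f$ (strictly larger when $d \geq 2$, and equal when $d = 1$). Hence the degree never drops before step $N$, so $\sigma(f) > N$, which gives the theorem.

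I do not expect a genuine obstacle; the argument is essentially bookkeeping built on the two facts established earlier. The only points requiring care are that the degree recursion is exact rather than merely the asymptotic relation used in Theorem \ref{th-f2t-sigma-m} — this is what the no-cancellation property in characteristic $2$ buys us, and the all-ones choice conveniently forces the degrees to be non-decreasing so that no $T^k(f)$ is ever a constant or $0$ — and that the chosen parity sequence is actually attained, which is precisely the surjectivity half of Lemma \ref{l-f2t-phi}. It is worth noting that the construction is uniform in $d$, so it also covers the low-degree cases $d \leq 2$, where almost every polynomial has finite stopping time: the theorem only asserts the existence of polynomials with arbitrarily large — though possibly still finite — stopping time.
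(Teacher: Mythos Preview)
Your proposal is correct and follows essentially the same approach as the paper: choose the all-ones parity vector and invoke the bijection $\Phi_m$ from Lemma~\ref{l-f2t-phi} to produce the desired $f$, then read off $\deg T^k(f) = \deg f + (d-1)k$. Your write-up is in fact more careful than the paper's, which only states the degree formula at the endpoint $k=N$ rather than verifying $\deg T^k(f) \geq \deg f$ for every intermediate $0 < k \leq N$ as the theorem actually requires.
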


\begin{proof}
Simply create the vector $\left[ 1, 1, \ldots, 1 \right] \in \left\{ 0, 1 \right\}^N$ and use the bijection $\Phi: \FF_2[t]/t^N \rightarrow \left\{ 0, 1 \right\}^N$ to find the polynomial $f$ of degree $<N$ with this parity sequence. The sequence is made entirely of ones, so
$$ \deg T^N(f) = \deg f + N(-1 + \deg m ) .$$
\end{proof}

\subsection{Experimental Results} \label{sec-f2t-experimental}

Our C++ implementation of the $mx+1$ system in $\FF_2[t]$ uses integer arrays to represent elements of $\FF_2[t]$, with each coefficient stored as a single bit. With polynomials represented this way, arithmetic in $\FF_2[t]$ can be programmed entirely using fast bitwise logical operations. Source code for this project can be found at \url{github.com/nichols/polynomial-mxplus1}.

For multiple values of $m \in \FF_2[t]$, we computed the trajectory $\left\{T^k(f)\right\}$ of each polynomial $f \in \FF_2[t]$ of degree $ < 20 $. Each trajectory was computed for $10^5$ steps, or until a cycle was detected (using Brent's cycle-finding algorithm \cite{brent-cycles}). For those polynomials with stopping time $\sigma(f) \leq 10^5$, we recorded the value of $\sigma(f)$; for the rest, we recorded $\sigma(f) > 10^5 $. We conjecture that many if not most of these polynomials have $\sigma(f) = \infty$.

The running time of a single iteration of the $mx+1$ map $T(f)$ is linear in the degree of $f$. Most trajectories tend to either converge quickly to a cycle or else increase linearly in degree indefinitely. For polynomials of the latter type, the running time of computing the first $N$ terms of a trajectory is quadratic in $N$. Accordingly, the small set of apparently divergent trajectories occupied most of the running time of our computations. Figure \ref{fig-f2t-trajectories} shows three different apparently divergent trajectories for $m = t^2 + t + 1$. Notice that all in all three trajectories, the degree appears to increase linearly with slope $2/5$. Every long acyclic trajectory we observed fits this pattern.

\begin{figure*}[h] \label{fig-f2t-trajectories}
\caption{Degree plot of three disjoint trajectories which appear to diverge}
\includegraphics{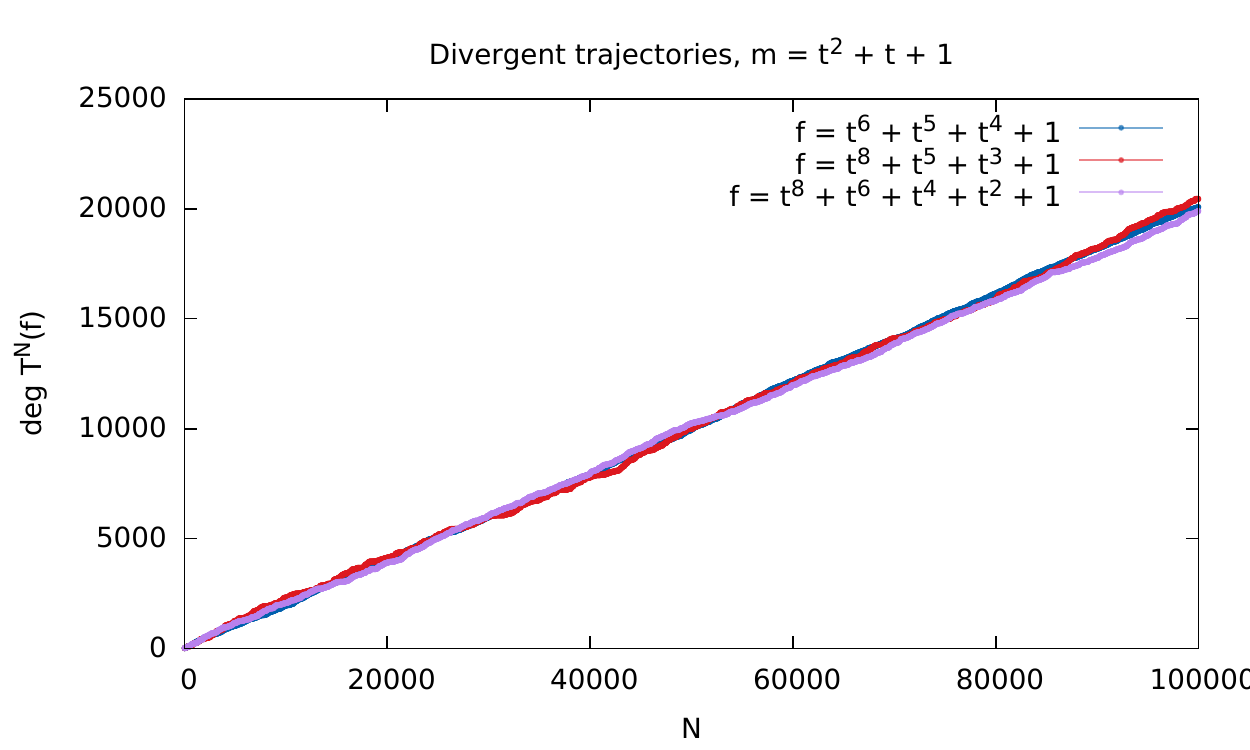}
\end{figure*}

With regard to stopping times, our data supports the theoretical predictions on Theorem \ref{th-f2t-terras} for all the $m$ we tested of degree  not equal to 2. For quadratic $m$, we found a significant number of polynomials with stopping times greater than $10^5$. This does not contradict the theorem's predictions that almost all $f \in \FF_2[t]$ should have finite $mx+1$ stopping time for $\deg m \leq 2$. But it does suggest that the density may converge to zero very slowly.

On the subject of cycle lengths, all the cycles we observed had periods divisible by four, and nearly all were powers of 2. We observed some interesting patterns in the distribution of periods.

\subsubsection{Stopping times}

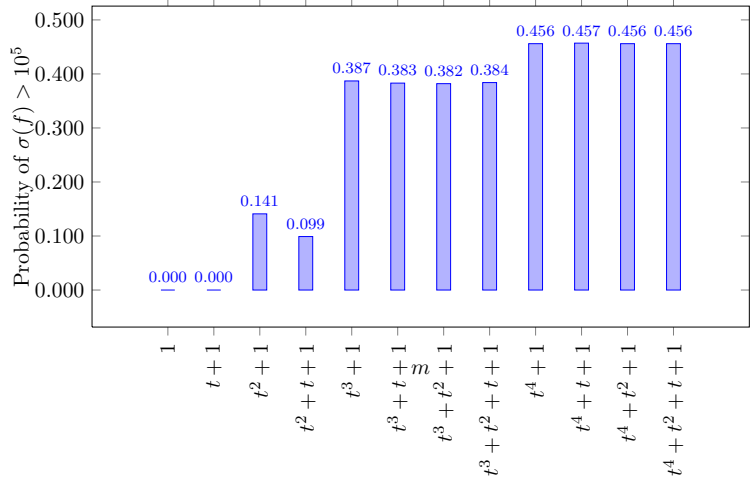
\begin{figure*}[h] \label{fig-f2t-sigma-m}
\caption{Frequency of long stopping times ($\sigma(f) > 10^5$) for various $m \in \FF_2[t]$}
\centering
	\begin{tikzpicture}[scale=0.75]
		\pgfkeys{
		    /pgf/number format/precision=3, 
		    /pgf/number format/fixed zerofill=true,
		    /pgf/number format/fixed
		}
		\begin{axis}[
		x tick label style={
		/pgf/number format/1000 sep=},
		ylabel=Probability of $\sigma(f) > 10^5$,
		enlargelimits=0.15,
		x post scale=1.7,
		legend style={at={(0.5,-0.15)},
		anchor=north,legend columns=-1},
		ybar,
		bar width=7pt,
		symbolic x coords={$1$,$t+1$,$t^2+1$,$t^2+t+1$,$t^3+1$,$t^3+t+1$,$t^3+t^2+1$,$t^3+t^2+t+1$,$t^4+1$,$t^4+t+1$,$t^4+t^2+1$,$t^4+t^2+t+1$},
		xtick=data,
		xlabel=$m$,
		x tick label style={rotate=90,anchor=east},
		nodes near coords,
		every node near coord/.append style={font=\scriptsize},
		nodes near coords align={vertical}
		]
		\addplot
		coordinates { ($1$, 0.000) ($t+1$, 0.000) ($t^2+1$, 0.141) ($t^2+t+1$, 0.099) ($t^3+1$, 0.387) ($t^3+t+1$, 0.383) ($t^3+t^2+1$, 0.382) ($t^3+t^2+t+1$, 0.384) ($t^4+1$, 0.456) ($t^4+t+1$, 0.457) ($t^4+t^2+1$, 0.456) ($t^4+t^2+t+1$, 0.456 ) };
		
		\end{axis}
\end{tikzpicture}
\end{figure*}

Figure \ref{fig-f2t-sigma-m} shows the number of polynomials with $\sigma(f) > 10^5$ for each choice of $m \in \FF_2[t]$. Notice that for $\deg m \neq 2$, we find almost exactly the number of infinite stopping times predicted by Theorem \ref{th-f2t-terras}. When $\deg m = 2$, the theorem predicts that the asymptotic density of infinite stopping time trajectories should be zero, but we found a significant number of polynomials which have stopping times $\sigma(f) > 10^5$. There are two possible explanations for this phenomenon.
\begin{enumerate}
	\item The probability of choosing a polynomial $f$ of degree $< N$ with $\sigma(f) = \infty$ converges to zero very slowly as $N \rightarrow \infty$. That is, there may be many low-degree polynomials with infinite stopping time, but the frequency decreases to zero as the degree increases.
	\item There are a significant number of polynomials which have very high finite stopping times -- in this case, with $\sigma(f) > 10^5$. That is, the distribution of finite stopping times could have a ``long tail''.
\end{enumerate}
To put it another way, Theorem \ref{th-f2t-terras} states that when $\deg m \leq 2$,
$$ P_d = \lim_{N \rightarrow \infty} \left[ \lim_{M \rightarrow \infty} P \left( \sigma(f) > M \;\big|\; \deg f < D \right) \right] = 0 .$$
We found that for both quadratic $m$, this quantity is not close to zero when $M = 10^5$ and $D = 20$, so we would need to increase at least one of these two variables to see evidence of convergence to zero. Figure \ref{fig-f2t-sigma-57} shows the distribution of known stopping times in polynomials of degree $< 20$ for $m = t^2 + 1$ and $m = t^2 + t + 1$, with $m = t + 1$ and $m = t^3 + 1$ presented for comparison. For $m \neq 2$, most trajectories either quickly descend below their initial degree, or apparently diverge. But for quadratic $m$, we see a broader distribution of stopping times. This is yet another reason why the most interesting $mx+1$ systems in $\FF_2[t]$ are those generated by quadratic $m$, and in particular $m = t^2 + t + 1$.

\begin{figure*}[h] \label{fig-f2t-sigma-57}
\caption{Distribution of stopping times among polynomials of degree $< 20$ for four values of $m$.}
\centering
	\[ \begin{array}{|c|c|c|c|c|} \hline
	\sigma(f)	&	t+1	&	t^2+1	&	t^2+t+1	&	t^3+1 \\ \hline
0-50	&	1048573	&	900255	&	930844	&	642494	\\
50-100	&	0	&	413	&	12315	&	0	\\
100-150	&	0	&	0	&	724	&	0	\\
150-200	&	0	&	1	&	90	&	0	\\
200-250	&	0	&	0	&	36	&	0	\\
250-300	&	0	&	0	&	9	&	0	\\
300-350	&	0	&	0	&	5	&	0	\\
350-400	&	0	&	0	&	2	&	0	\\
400-450	&	0	&	0	&	1	&	0	\\ \hline
> 10^5	&	2	&	147906	&	104549	&	406081 \\ \hline
	\end{array}
	\]
\end{figure*}

\subsubsection{Cycle lengths}

We also examined the distribution of periods among $t^2 + t + 1$ trajectories. For $f \in \FF_2[t]$, we define $\lambda(f)$ as follows: if the trajectory of $f$ eventually reaches a cycle of period $N$, then $\lambda(f) = N$. If the trajectory does not become cyclic within $10^5$ iterations, then $\lambda(f) = \infty$. It is clear that $\lambda(f)$ must be even, and with minimal effort one can prove that $\lambda(f) \geq 4$ with equality if and only if $T^N(f) = 1$ for some $N$, because the trivial cycle is the only cycle of length 4.

We observed that nearly all (about 95\%) of $f \in \FF_2[t]$ of degree $< 20$ have $\lambda(f) = 2^k$ for some $k \geq 2$. The highest such $k$ observed was 13. The few remaining trajectories either fail to become cyclic within the first $10^5$ iterations, or become cyclic with periods $\lambda \neq 2^k$. Even in this case, all observed lambdas were multiples of 4.

Within the set of polynomials with $\lambda(f) = 2^k$ for some $k$, we noticed the following pattern: for each $k$, the density of polynomials with $\lambda(f) = 2^k$ appears to increase until it hits a peak, and then gradually tails off. Figure \ref{fig-f2t-lambda} shows for each $k$ the probability that $\lambda(f) = 2^k$ for a randomly chosen $f \in \FF_2[t]$ of degree $d$. So if one selects a random polynomial $f$ of degree $d$, the expected value of $\lambda(f)$ should increase as $d$ increases.

\begin{figure*}[h] \label{fig-f2t-lambda}
\caption{Distribution of $mx+1$ periods for $m = t^2 + t + 1$. Only those polynomials with $\lambda(f)$ a power of 2 are shown.}
\includegraphics[scale=1]{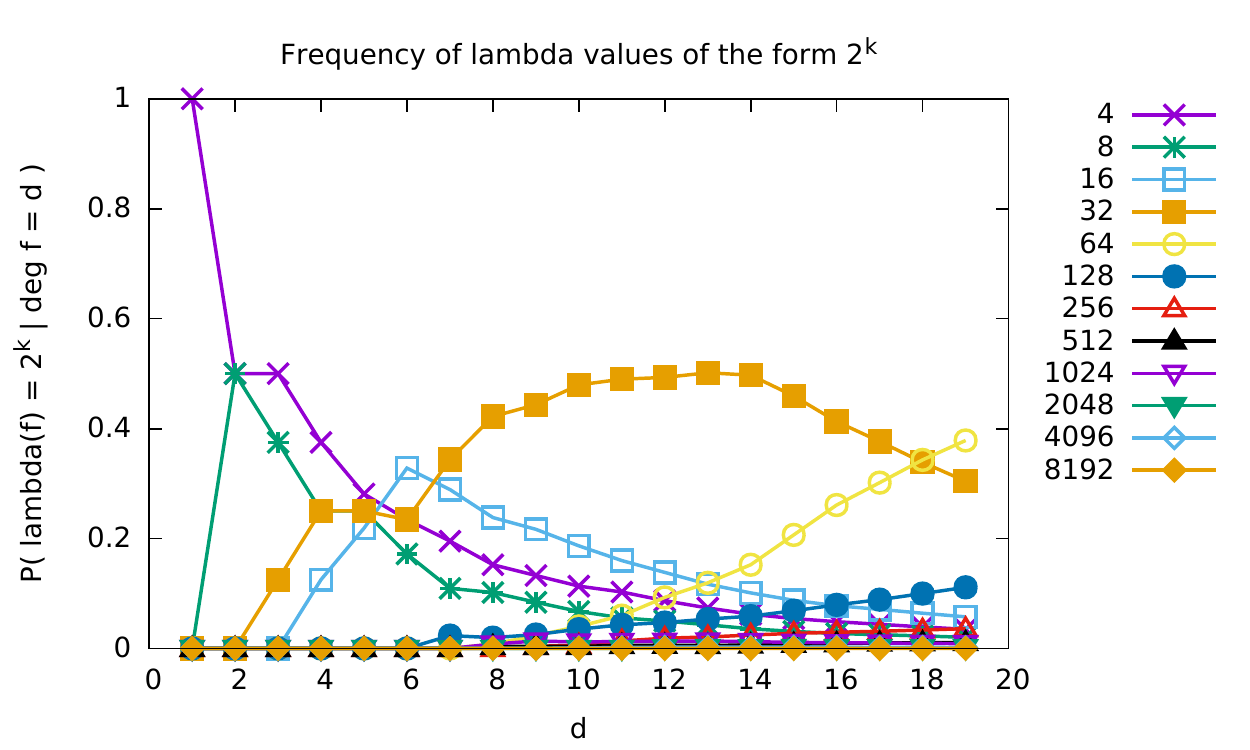}
\end{figure*}

\section{$3x+1$ analogue in the Ring of Functions of an algebraic curve}

In the previous section we investigated $mx+1$ systems in $\FF_2[t]$. As we pointed out in the introduction, $\FF_2[t]$ is the ring of regular functions of the affine line over $F_2$, so it is natural to try to define $mx+1$ systems on rings of functions of other algebraic curves over $\FF_2$. We denote by $R_r$ the ring $\FF_2[x,t] / (x^2 + tx + r(t))$, where $r(t) \in \FF_2[t]$ is some irreducible polynomial.  This is the ring of regular functions on the hyperelliptic curve $x^2 + tx + r(t) = 0$.

Any element $f \in R_r$ has a unique representation of the form $ f(x,t) = f_0(t) + xf_1(t) $ for some $f_0,f_1 \in \FF_2[t]$. Our goal is to define a transformation map $T: R_r \rightarrow R_r$ analogous to the $3x+1$ map in $\ZZ$. We choose a polynomial $m \in R_r$ and define

\[ T(f) = \left\{ \begin{array}{l l}
	\frac{mf + 1 + x}{t}, & f \equiv 1 + x \mod t \\
	\frac{f+x}{t}, & f \equiv x \mod t \\
	\frac{f+1}{t}, & f \equiv 1 \mod t \\
	\frac{f}{t}, & f \equiv 0 \mod t.\\
\end{array} \right. \]

Let $m(x,t) = m_0(t) + xm_1(t)$. Because the ideal $x^2 + tx + r(t)$ is zero, we can write
\begin{align*}
mf + 1 + x &= \left[ m_0f_0 + m_1f_1r + 1 \right] \\
&+ x\left[ m_0f_1 + m_1f_0 + tf_1m_1 + 1 \right] .
\end{align*}
In order to make sure that $mf+1+x$ is always divisible by $t$ when $f \equiv 1+x \mod t$, we require that $m \equiv x \mod t$. 

Repeated iteration of $T$ defines a discrete dynamical system in $R_r$. The trajectory of a given polynomial $f$ is the sequence $T^k(f)$, $k=0,1,2,\ldots $. Each trajectory must either diverge or fall into a cycle (which may be the trivial cycle, $\left\{ 0 \right\}$). There are two parameters that will influence the behavior of the trajectories: the polynomial $r \in \FF_2[t]$ which determines the algebraic curve, and the polynomial $m \in R_r$ used to define the map $T$ on $R_r$. The more interesting of these is $m$, so we will fix $r(t) = t^2 + t + 1$ and study how the dynamics are affected by $m$. As with the $mx+1$ systems in $\FF_2[t]$, we expect that the probability of finding a divergent trajectory will grow with the degree of $m$.

We define the stopping time $\sigma(f)$ to be the minimum number of steps required before the trajectory of $f$ reaches a polynomial of lower degree than $f$. Note that by the `degree' of $f \in R_r$ we always mean the total $t$-degree of $f$, i.e. $\deg f = \max \left\{ \deg f_0, \deg f_1 \right\}$ when $f$ is written as $f_0(t) + xf_1(t)$. Finally, we define the parity sequence of $f$ to be the sequence $p_0, p_1, p_2, \ldots$ where $p_k = (T^k(f))(x,0)$. That is, $p_k \in \left\{ 0, 1, x, 1+x \right\}$ is the congruence class of $T^k(f)$ modulo $t$. We will later use the fact that when $T^k(f) \not\equiv 1 + x \mod t$, $T^{k+1}(f) = ( T^k(f) + p_k ) / t$.

Our ultimate goal is to prove the following analogue of Terras' theorem in this setting.

\begin{theorem} \label{th-f2xt-everett}
For $m \in R_r$ of degree $d$, let $P_d$ be the probability that a randomly chosen polynomial in $R_r$ has finite $mx+1$ stopping time. That is,
$$ P_d = \lim_{N \rightarrow \infty} P \left( \sigma(f) < \infty | \deg f < N \right) .$$
If $d \leq 4$, then $P_d = 1$. If $d > 4$, then $P_d \in (3/4,1)$ is the unique real root of the polynomial $g_d(z) = z^d - 4z + 3$ that lies inside the unit disk.
\end{theorem}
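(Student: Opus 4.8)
The plan is to transplant the Everett-style argument used for Theorem \ref{th-f2t-terras} into the ring $R_r$, where the only structural change is that the parity alphabet grows from $\{0,1\}$ to $\{0,1,x,1+x\}$ and, correspondingly, the coin governing the random walk becomes biased. Concretely, I would carry out three steps: first, prove a bijection lemma, the exact analogue of Lemma \ref{l-f2t-phi}, showing that the length-$N$ parity sequence of a uniformly chosen element of $R_r$ is uniformly distributed over $\{0,1,x,1+x\}^N$; second, translate this uniform distribution into a statement about the degree of $T^N(f)$, reducing finiteness of $\sigma$ to a gambler's-ruin problem for a random walk whose up-step occurs with probability $1/4$; and third, solve that gambler's-ruin problem through a linear recurrence whose auxiliary polynomial is exactly $g_d(z) = z^d - 4z + 3$.

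For the first step I would induct on $N$, exactly as in the proof of Lemma \ref{l-f2t-phi}, but now with four cases in the lift corresponding to the four congruence classes of $T^N(f)$ modulo $t$. The content is that knowing $f \bmod t^N$ together with the next parity symbol $p_N \in \{0,1,x,1+x\}$ pins down $f \bmod t^{N+1}$; in the three ``division'' cases $T(f) = (f+p)/t$ this is immediate, and in the single ``multiplication'' case $f \equiv 1+x$ one uses $T(f) = (mf + 1 + x)/t$. Here the hypotheses already imposed in the construction — namely $m \equiv x \bmod t$, together with $r(0) = 1$ forced by irreducibility of $r$, so that $x^2 \equiv 1 \bmod t$ — are exactly what guarantee $mf + 1 + x \equiv 0 \bmod t$, so the map stays inside $R_r$. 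Since $|R_r/t^N| = 4^N = |\{0,1,x,1+x\}^N|$, injectivity of the lift upgrades to a bijection by cardinality. The payoff is that the multiplication indicator $X_k$, which equals $1$ when $p_k = 1+x$ and $0$ otherwise, is an i.i.d. Bernoulli variable with $P(X_k = 1) = 1/4$, since exactly one of the four equally likely symbols triggers a multiplication.

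For the second step I would record the degree bookkeeping: each division step lowers the total $t$-degree by $1$ while each multiplication step raises it by $d-1$, so with $s(N) = \sum_{k<N} X_k$ one gets $\deg T^N(f) = \deg f + d\,s(N) - N$, the verbatim analogue of the $\FF_2[t]$ formula. Hence $\sigma(f) < \infty$ iff $s(N) < N/d$ for some $N$, and by the first step this is the ruin event for a gambler who gains $\$(d-1)$ with probability $1/4$ and loses $\$1$ with probability $3/4$. In the third step the ruin probabilities $U_k$ satisfy $4U_k = U_{k+d-1} + 3U_{k-1}$, whose characteristic equation is $z^d - 4z + 3 = 0$. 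The drift of the walk is $\tfrac14(d-1) - \tfrac34 = \tfrac{d-4}{4}$, which changes sign at $d = 4$; this is why $d=4$ is the threshold. For $d < 4$ the drift is negative and ruin is certain, and for $d = 4$ the auxiliary polynomial has a double root at $z = 1$ (indeed $g_4(z) = (z-1)^2(z^2+2z+3)$), so the linear ansatz $U_k = c_1 + c_2 k$ gives $P_{4,W} = W/(W+1) \to 1$ exactly as in the $d=2$ case of Lemma \ref{l-f2t-ruin}; thus $P_d = 1$ for all $d \le 4$. For $d > 4$ the root $z=1$ is simple, since $g_d'(1) = d-4 \neq 0$, and solving the boundary-value system by Cramer's rule and letting $W \to \infty$ leaves the root of smallest modulus as the limiting ruin probability.

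The crux, and the step I expect to be the main obstacle, is concentrated in the new ring and is twofold. First, carrying out the four-case lift inside the quotient $R_r$ is more delicate than in $\FF_2[t]$, because multiplication by $m$ mixes the two components $f_0, f_1$ through the relation $x^2 = tx + r(t)$; I must check that the degree bookkeeping remains exact and that no leading-term cancellation disturbs the formula $\deg T^N(f) = \deg f + d\,s(N) - N$. Second, pinning down the root requires showing $g_d(z) = z^d - 4z + 3$ has exactly one zero in the open unit disk, that it is real, and that it lies in $(3/4,1)$; Descartes' rule of signs (two sign changes, hence at most two positive real roots, one of which is the boundary root $z=1$) controls reality, while Rouch\'e's theorem comparing $z^d$ with $-4z+3$ controls the count. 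The boundary zero at $z=1$ is the only genuine snag: on $|z|=1$ one has $|z^d| = 1 = |{-4z+3}|$ precisely at $z=1$, so I would apply Rouch\'e on the circle $|z| = 1-\varepsilon$ instead, where for $d > 4$ the estimate $(1-\varepsilon)^d < 1 - 4\varepsilon \le |{-4z+3}|$ holds for small $\varepsilon$, forcing exactly one interior zero (matching the single zero $z = 3/4$ of $-4z+3$) and identifying it with the real root in $(3/4,1)$ supplied by $g_d(3/4) = (3/4)^d > 0$ and $g_d(1)=0$ with $g_d'(1)>0$.
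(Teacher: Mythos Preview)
Your three-step outline matches the paper's proof exactly: the bijection theorem (Theorem \ref{th-f2xt-phi}), the reduction to a $p=1/4$ Bernoulli random walk, and the gambler's-ruin analysis via the recurrence with auxiliary polynomial $g_d(z) = z^d - 4z + 3$ (Lemma \ref{th-f2xt-ruin}). Your treatment of the threshold $d=4$ and of the root-counting via Descartes and Rouch\'e is also in line with the paper's.

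The one place where you understate the difficulty is the degree bookkeeping. You correctly flag that leading-term cancellation in $R_r$ could disturb $\deg(mf) = \deg m + \deg f$, but this is not merely something to ``check'': it is genuinely false for general $m \in R_r$, because multiplication mixes $f_0,f_1$ through $x^2 = tx + r(t)$ and the top-degree contributions to the two components can cancel. The paper resolves this not by a verification but by imposing an additional standing hypothesis on $m$, namely $\deg m_1 - \deg m_0 < -1$ where $m = m_0(t) + x m_1(t)$, and then proving as a separate lemma (Lemma \ref{th-f2xt-m}) that under this hypothesis $\deg(mf) = \deg m + \deg f$ holds for every $f \in R_r$. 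That lemma is a three-case analysis depending on where $\deg f_1 - \deg f_0$ sits relative to $\mu = \deg m_1 - \deg m_0$ and $-2-\mu$. Without this hypothesis and lemma the formula $\deg T^N(f) = \deg f + d\,s(N) - N$ can fail and the random-walk model collapses, so the obstacle you anticipated is real, and its resolution costs an extra restriction on $m$ rather than just a check.
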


Note that like Theorem \ref{th-f2t-terras}, this is stronger than the analogous result for the integer $3x+1$ system because it provides numerical values for the probability of divergence. Just as in Section 2, our first step is to prove that the parity sequence of a randomly chosen polynomial is distributed uniformly. The parity sequences of all polynomials in the set $\left\{ g + t^N q : q \in R_r \right\}$ must have the same first $N$ terms. Therefore, there is a well-defined function
$$\Phi_m : R_r/t^N \longrightarrow \left\{ 0, 1, x, 1+x \right\}^N $$
which maps each element of $R_r/t^N$ to the first $N$ terms of its parity sequence.

However, we require a special lemma before we can prove that this map is a bijection. Our proof of the analogous result in $\FF_2[t]$ relied on the fact that $\deg fg = \deg f + \deg g$ for all $f,g \in \FF_2[t]$. In $R_r$, we can no longer depend on this assumption, but we can prove a weaker version of this rule by accepting an additional restriction on $m$.

\begin{lemma} \label{th-f2xt-m}
	Let $m = m_0 + xm_1$ and $f = f_0 + xf_1$ be elements of $R_r$. If $\deg m_1 - \deg m_0 < -1$, then $\deg mf = \deg m + \deg f$.
\end{lemma}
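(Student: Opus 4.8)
The plan is to reduce everything to an explicit product formula and then do careful degree bookkeeping. Expanding $mf = (m_0 + x m_1)(f_0 + x f_1)$ and substituting the defining relation $x^2 = tx + r$ (which holds because we are in characteristic $2$ and the ideal is generated by $x^2 + tx + r$), I would obtain
$$ mf = \bigl( m_0 f_0 + r\, m_1 f_1 \bigr) + x\bigl( m_0 f_1 + m_1 f_0 + t\, m_1 f_1 \bigr), $$
so that the two coordinates are $(mf)_0 = m_0 f_0 + r m_1 f_1$ and $(mf)_1 = m_0 f_1 + m_1 f_0 + t m_1 f_1$. The hypothesis $\deg m_1 - \deg m_0 < -1$ forces $m_0 \neq 0$ and $\deg m_1 \le \deg m_0 - 2$, hence $\deg m = \deg m_0$; write $a = \deg m_0$. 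Since the curve has been fixed with $r = t^2 + t + 1$, we have $\deg r = 2$. The target is then to show $\deg(mf) = a + \deg f$, where $\deg f = \max\{\deg f_0, \deg f_1\}$.

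The key step is two dominance observations. In $(mf)_0$ the term $m_0 f_0$ has degree $a + \deg f_0$, while the twisted term satisfies $\deg(r m_1 f_1) \le 2 + (a-2) + \deg f_1 = a + \deg f_1$; hence if $\deg f_0 > \deg f_1$ the term $m_0 f_0$ is strictly dominant and $\deg(mf)_0 = a + \deg f$. In $(mf)_1$ the term $m_0 f_1$ has degree $a + \deg f_1$, while $\deg(m_1 f_0) \le (a-2) + \deg f_0$ and $\deg(t m_1 f_1) \le (a-1) + \deg f_1$; hence if $\deg f_1 \ge \deg f_0$ the term $m_0 f_1$ is strictly dominant and $\deg(mf)_1 = a + \deg f$. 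Since the conditions $\deg f_0 > \deg f_1$ and $\deg f_1 \ge \deg f_0$ exhaust all possibilities, in every case one of the two coordinates attains degree exactly $a + \deg f$ through a strictly dominant (hence non-cancelling) leading term. The same bounds show the other coordinate never exceeds $a + \deg f$, so $\deg(mf) = a + \deg f = \deg m + \deg f$.

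The step I expect to be the main obstacle is guarding against cancellation of leading terms, which is exactly why the naive rule $\deg(mf) = \deg m + \deg f$ fails in $R_r$. The relation $x^2 = tx + r$ injects the extra summand $r m_1 f_1$ into the constant coordinate, and when $\deg f_0 = \deg f_1$ this summand can tie the degree of $m_0 f_0$; since all leading coefficients equal $1$ over $\FF_2$, the two might cancel and drop the degree of $(mf)_0$. The hypothesis $\deg m_1 \le \deg m_0 - 2$ is calibrated precisely against $\deg r = 2$ so that this twisted term reaches at most $a + \deg f_1 \le a + \deg f$ and never overshoots; and the delicate boundary case $\deg f_0 = \deg f_1$ falls under $\deg f_1 \ge \deg f_0$, where the coordinate $(mf)_1$ carries the degree with a strictly dominant leading term and so cannot cancel. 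Verifying these inequalities (together with the degenerate cases $f_0 = 0$ or $f_1 = 0$, which land in one of the two cases under the usual $\deg 0 = -\infty$ convention) is then routine.
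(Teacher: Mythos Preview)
Your proof is correct and follows the same strategy as the paper's: expand $mf$ using $x^2 = tx + r$, then do degree bookkeeping to locate a strictly dominant term in one of the two coordinates. Your organization is in fact tidier---you split into just two cases on the sign of $\deg f_1 - \deg f_0$, whereas the paper uses a three-case trichotomy on $\delta = \deg f_1 - \deg f_0$ relative to $\mu$ and $-2-\mu$; both arguments implicitly need $\deg r = 2$, which you make explicit.
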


For the rest of this paper, when we consider an $mx+1$ system in $R_r$, we always assume $m$ satisfies this condition.

\begin{proof}
	Note that since $\deg m_1 < \deg m_0$, we always have $\deg m = \deg m_0$. Label $g = mf = g_0 + xg_1$. To prove this lemma, we just need to carefully examine the summands of $g_0$ and $g_1$ to determine which has the greatest degree and therefore determines the degree of the sum.
	
	Let $\mu = \deg m_1 - \deg m_0$ and let $\delta = \deg f_1 - \deg f_0$. We must consider three cases:
	\begin{description}
		\item[Case 1: $\delta \leq \mu$.]
		
		Since $\delta \leq \mu < -1$, we know that $\deg f_0 > \deg f_1$, and so the total degree of $f$ is $\deg f = \deg f_0$.
		
		We know that $g_0 = m_0f_0 + rm_1f_1$. Since $\deg m_0 > \deg m_1 + 1$ and $\deg f_0 > \deg f_1 + 1$, we see that $\deg m_0f_0 > \deg m_1f_1r$. Therefore $m_0f_0$ is the dominant term, and so the degree of $g_0$ is $\deg g_0 = \deg m_0 + \deg f_0$.
		
		Now $g_1 = m_0f_1 + m_1f_0 + tm_1f_1$. In this case the dominant term is $m_1f_0$, so $\deg g_1 = \deg m_1 + \deg f_0$. Since $\deg m_0 > \deg m_1$, we have $\deg g_0 > \deg g_1$ and therefore the total degree of $g$ is $\deg g = \deg m_0 + \deg f_0$.
		
		Putting all of this together, we see that $\deg g = \deg m_0 + \deg f_0 = \deg m + \deg f$.
		
		\item[Case 2: $\mu < \delta < -2 - \mu$.]
		
		Recall that $g_0 = m_0f_0 + rm_1f_1$. Using the fact that $\delta < -2 - \mu$, we can see that 
		$$ \deg f_1 - \deg f_0 < -2 - \deg m_1 + \deg m_0 $$
		and therefore
		$$ \deg f_1 + \deg m_1 + 2 < \deg f_0 + \deg m_0 .$$
		
		So the dominant term in $g_0$ is $f_0m_0$, and so $\deg g_0 = \deg f_0 + \deg m_0$.
		
		Next, consider $g_1 = m_0f_1 + m_1f_0 + tm_1f_1$. Since $\delta > \mu$, we have
		$$ \deg f_1 + \deg m_0 > \deg f_0 + \deg m_1 $$
		so the term $f_1m_0$ dominates the term $f_0m_1$. Furthermore, since $\mu < -1$, we have $\deg m_0 > \deg m_1 + 1$, so $f_1m_0$ also dominates $f_1m_1t$. Therefore $\deg g_1 = \deg f_1 + \deg m_0$.
		
		In this case, we don't know whether $\delta$ is positive, negative, or zero. So we can't be sure about which component of $f$ is dominant. However, we have proved that $\deg g_0 = \deg f_0 + \deg m_0$ and that $\deg g_1 = \deg f_1 + \deg m_0$. So either way, $\deg g = \deg f + \deg m$.
		
		\item[Case 3: $\delta \geq -2 - \mu$.]
		
		In this case, $\delta \geq 0$ because $\mu < -1$, so necessarily $\deg f = \deg f_1$. Now consider the degree of $g$. The term $f_1m_0$ in $g_1$ is not dominated by either term of $g_0$. Since $\delta \geq 0$, we know that $\deg f_1 \geq \deg f_0$ and therefore the degree of $f_1m_0$ is not less than the degree of $f_0m_0$. Also, since $\mu < -1$, we know that $\deg m_0 \geq \deg m_1 + 2$, and so the degree of $f_1m_0$ is not less than the degree of $f_1m_1q$. Therefore $\deg g = \deg g_1$. Now we need only find out which term of $g_1$ is dominant.
		
		Because $\mu < -1$, we have $\deg m_0 > \deg m_1 + 1$, so the term $f_1m_0$ dominates $f_1m_1t$. Lastly, since $\mu < 0$ and $\delta \geq 0$, the term $f_1m_0$ dominates $f_0m_1$. Therefore, $f_1m_0$ is the dominant term in $g_1$. In conclusion,
		\begin{align*}
		\deg g &= \deg g_1 \\
		&= \deg f_1 + \deg m_0 \\
		&= \deg f + \deg m
		\end{align*}
		
	\end{description}
	Having proven the desired result in all three cases, we have completed the proof of this lemma.
\end{proof}

Now we are equipped to prove that $\Phi_m$ is a bijection.
\begin{theorem} \label{th-f2xt-phi} The map $\Phi_m$ described above is a set bijection. That is, every sequence $\left\{ p_0, p_1, \ldots, p_{N-1} \right\}$ with $p_i \in \left\{ 0, 1, x, 1+x \right\}$ is the first $N$ terms of the parity sequence of a unique polynomial $f \in R_r$ with $ \deg f < N$. Specifically, the parity sequence determines the initial polynomial $f$ and its $N$-th iterate $f_N$ up to choice of $q_N$:
$$ f = g_{N-1} + t^N q_N ,\quad \deg g_{N-1} < N $$
$$ f_N = h_{N-1} + m^{s(N)}q_N ,\quad \deg h_{N-1} < s(N)\deg m $$
where $s(N)$ is defined
$$s(N) = \#\left\{ 0 \leq k < N : p_k = 1 + x \right\}.$$
\end{theorem}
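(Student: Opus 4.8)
The plan is to proceed by induction on $N$, mirroring the proof of Lemma \ref{l-f2t-phi} but accounting for the two new features of $R_r$: there are now four residue classes modulo $t$ rather than two, and multiplicativity of degree is no longer automatic. The latter is exactly what Lemma \ref{th-f2xt-m} supplies, so that lemma will be the workhorse for every degree estimate. For the base case $N=1$, the single term $p_0 = f(x,0)$ pins down $f \bmod t$, so writing $f = p_0 + t q_1$ gives $g_0 = p_0$ with $\deg g_0 < 1$; applying $T$ in each of the four residue classes then exhibits $f_1$ in the required form with $s(1) \in \{0,1\}$, and the degree bound on $h_0$ is either vacuous (when $p_0 \neq 1+x$, so $h_0 = 0$) or the easy estimate $\deg h_0 < \deg m$ (when $p_0 = 1+x$).

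For the inductive step I would assume the statement for $N$, so that $f = g_{N-1} + t^N q_N$ and $f_N = h_{N-1} + m^{s(N)} q_N$ with the stated degree bounds. The key observation is that $m \equiv x \bmod t$, together with $x^2 \equiv r(0) = 1 \bmod t$ (the latter holding since $r$ is irreducible, hence $r(0)=1$), forces $m^{s(N)} \bmod t$ to be a unit, namely $1$ or $x$ according to the parity of $s(N)$. Consequently the known term $p_N = f_N \bmod t$ and the known polynomial $h_{N-1}$ determine $q_N \bmod t$ unambiguously; writing $q_N = c_N + t q_{N+1}$ with $c_N$ constant and substituting gives $f = (g_{N-1} + t^N c_N) + t^{N+1} q_{N+1}$, so $g_N := g_{N-1} + t^N c_N$ satisfies $\deg g_N < N+1$ as required.

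It then remains to compute $f_{N+1} = T(f_N)$ and verify its form. Substituting $q_N = c_N + t q_{N+1}$ gives $f_N = (h_{N-1} + m^{s(N)} c_N) + t\, m^{s(N)} q_{N+1}$, and I would split into the four cases for $p_N$. When $p_N \neq 1+x$ one uses $f_{N+1} = (f_N + p_N)/t$, which leaves $s$ unchanged and produces $h_N = (h_{N-1} + m^{s(N)} c_N + p_N)/t$; when $p_N = 1+x$ one uses $f_{N+1} = (m f_N + 1 + x)/t$, which increments $s$ and produces $h_N = (m h_{N-1} + m^{s(N)+1} c_N + 1 + x)/t$. In each case two things must be checked: that the numerator is divisible by $t$, and that $\deg h_N < s(N+1)\deg m$. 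Divisibility is immediate in the first three cases from the definition of $p_N$, while in the last case it reduces to $m(1+x)+1+x \equiv (1+x)^2 \equiv 1 + r(0) \equiv 0 \bmod t$, which is precisely where irreducibility of $r$ is used. The degree bound is where Lemma \ref{th-f2xt-m} enters: it yields $\deg(m h_{N-1}) = \deg m + \deg h_{N-1}$ and $\deg(m^{s(N)} c_N) = s(N)\deg m$, so that each numerator has degree at most $s(N+1)\deg m$ and division by $t$ drops it strictly below that threshold.

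Finally, the induction constructs a unique $f \in R_r/t^N$ for each parity sequence, giving surjectivity; since $|R_r/t^N| = 4^N = |\{0,1,x,1+x\}^N|$, the map $\Phi_m$ is a bijection. I expect the main obstacle to be the bookkeeping of the degree estimate for $h_N$ across the four cases, in particular confirming that the term $m^{s(N)} c_N$, whose degree equals the threshold exactly, contributes a strictly smaller degree only after division by $t$, so the inequality is never violated. This is the step that genuinely depends on Lemma \ref{th-f2xt-m} rather than on naive degree additivity, which fails in $R_r$.
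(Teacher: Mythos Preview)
Your proposal is correct and follows essentially the same approach as the paper: induction on $N$, the four-case base step, and the inductive step that writes $q_N = c_N + t q_{N+1}$, uses $m \equiv x \bmod t$ and $x^2 \equiv r(0)=1 \bmod t$ to recover $c_N$ from $p_N$, and then invokes Lemma~\ref{th-f2xt-m} for the degree bounds on $h_N$. The only cosmetic difference is that the paper organizes the inductive case split primarily by the residue $v=q_N\bmod t$ (grouping $v\in\{0,1+x\}$ versus $v\in\{1,x\}$) and secondarily by whether $T^N(f)\equiv 1+x$, whereas you split directly on $p_N$; since $m^{s(N)}$ is a unit modulo $t$ these decompositions are equivalent.
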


Note that $s(N)$ is just the number of $1+x$ terms which appear in the first $N$ terms, which is the number of multiplications that occur in the first $N$ steps of the sequence starting from $f$.

As in $\FF_2[t]$, the proof takes the form of an algorithm that yields the unique polynomial in $R_r$ of degree $< N$ with a given parity sequence $\left\{ p_0, p_1, \ldots, p_{N-1} \right\}$. In proving this theorem, we will often be working modulo $t$, and we will frequently use the fact that $m \equiv x \mod t$. Also, we can rewrite the quotient ring $R_r/(t) = \FF_2[x,t]/(x^2+tx+r,t)$ as simply $\FF_2[x,t]/(x^2+1,t)$.

\begin{proof}
We prove the theorem by induction on $N=1,2,\ldots$. First consider $N=1$. There are four cases:
\begin{enumerate}
	\item If $p_0 = 0$, then $f = tq_1$ and $T(f) = q_1$, so $g_0 = 0$.
	\item If $p_0 = 1$, then $f = 1 + tq_1$ and $T(f) = q_1$, so $g_0 = 1$.
	\item If $p_0 = x$, then $f = x + tq_1$ and $T(f) = q_1$, so $g_0 = x$.
	\item If $p_0 = 1+x$, then $f = 1 + x + tq_1$ and $T(f) = (m(1+x) + 1 + x)/t + mq_1$, so $g_0 = 1 + x$.
\end{enumerate}
Each of the above cases gives us a unique $g_0$ from among the elements of $R_r$ of degree $< 1$, as needed. Next, we assume the theorem holds for some $N \geq 1$ and argue that it holds for $N+1$. Let $q_N = tq_{N+1} + v$, meaning $v$ is the element of $\{0, 1, x, 1+x\}$ equivalent to $q_N$ modulo $t$. Here there are just two cases.

\begin{description}

	\item[Case 1: $v = 0$ or $v = 1+x$. ]
	In this case, $m^{s(N)} q_N \equiv x^Z v \equiv v \mod t$, so $ T^N(f) \equiv h_{N-1} + v \mod t$. If $h_{N-1} + v \not\equiv 1+x \mod t$, then
	\begin{align*}
		T^{N+1}(f) &= \frac{ h_{N-1} + m^{s(N)}q_N + p_N }{ t } \\
		&= \frac{ h_{N-1} + m^{s(N)}v + p_N }{ t } + m^{s(N+1)}q_{N+1}.
	\end{align*}
	We now define $h_N = (h_{N-1} + m^{s(N)}v + p_N)/t$.	Referring to Lemma \ref{th-f2xt-m}, we determine that $\deg h_N < s(N)\deg m$, as required.
	
	If instead $h_{N-1} + v \equiv 1 + x \mod t$, then
	\begin{align*}
		T^{N+1}(f) &= \frac{ m \left( h_{N-1} + m^{s(N)}q_N \right) + 1 + x }{ t } \\
		&= \frac{ mh_{N-1} + m^{s(N+1)}v + 1 + x }{ t } + m^{s(N+1)}q_{N+1}.
	\end{align*}
	Once again we see that the degree of $ h_N = ( mh_{N-1} + m^{s(N+1)}v + 1 + x ) / t$ satisfies the condition of the theorem.
	
	\item[Case 2: $v = 1$ or $v = x$.]
	In this case,
	\[ m^{s(N)} q_N \equiv x^{s(N)} v \equiv \left\{ \begin{array}{l l}
	v, & s(N)\trm{ even} \\
	xv, & s(N)\trm{ odd}
	\end{array} \right. \mod t. \]
	So in order to make $T^N(f) = h_{N-1} + m^{s(N)}q_N$ be equivalent to $1+x \mod t$, one of the following must be true:
	\begin{itemize}
		\item $s(N)$ even, $h_{N-1} + v \equiv 1+x \mod t$
		\item $s(N)$ odd, $h_{N-1} + xv \equiv 1+x \mod t$.
	\end{itemize}
	If so, then
	\begin{align*}
		T^{N+1}(f) &= \frac{ m T^N(f) + 1 + x }{ t } \\
		&= \frac{ mh_{N-1} + m^{s(N+1)}v + 1 + x }{ t } + m^{s(N+1)}q_{N+1}
	\end{align*}
	and we define $h_N = ( mh_{N-1} + m^{s(N+1)}v + 1 + x )/t$, which has degree $< s(N+1)\deg m$.
	
	If neither of those two possibilities occurs, then
	\begin{align*}
		T^{N+1}(f) &= \frac{ h_{N-1} + m^{s(N)}( v + tq_N ) + p_N }{ t } \\
		&= \frac{ h_{N-1} + m^{s(N+1)}v + p_N }{ t } + m^{S(N+1)}q_N.
	\end{align*}
	So $h_N = ( h_{N-1} + m^{s(N)}v + p_N )/t$ satisfies $\deg h_N < s(N+1)\deg m$ as required.
	
\end{description}

We have established that a vector $\vec{p} = ( p_0, p_1, \ldots, p_{N-1} ) \in \left\{ 0, 1, x, t + x \right\}^N$ determines a unique polynomial $g_{N-1} \in R_r$ of degree $< N$ such that $\vec{p}$ is the first $N$ terms of the parity sequence of $f$, and that all polynomials in $R_r$ that satisfy this are of the form $g_{N-1} + t^Nq_N$ for some $q_N$. There are $4^N$ polynomials of degree $<N$ and there are $4^N$ elements of $\left\{ 0, 1, x, 1+x \right\}^N$. So by cardinality, the surjective map $\Phi$ is a bijection.
\end{proof}

We have now shown that the parity sequence of a randomly chosen $f \in R_r$ of degree $< N$ is distributed uniformly in $\left\{ 0, 1, x, 1+x \right\}^N$. Following the same outline as the $\FF_2[t]$ proof, we can then model the degree of $T^k(f)$ as a random walk:
$$ \deg T^k(f) = \deg f - N + (\deg m)\sum_{k=0}^{N-1}X_k $$
where $X_k$ are IID Bernoulli random variables. The difference is that this time, $X_k$ takes the value 1 with probability 1/4 and 0 otherwise. So the probability that a randomly chosen $f \in R_r$ has finite stopping time is equal to
$$ P \left( \exists N > 0 : \sum_{k=0}^{N-1}X_k < \frac{N}{d} \right) $$
where $d = \deg m$. This is just another version of the gambler's ruin problem, so we can prove the following result using the same methods as in $\FF_2[t]$.

\begin{lemma}\label{th-f2xt-ruin}
For $d > 0$, let $P_d$ be defined
$$ P_d = P \left( \exists N > 0 : \sum_{k=0}^{N-1}X_k < \frac{N}{d} \right) $$
where $X_i$ are IID Bernoulli variables taking the value 1 with probability 1/4 and 0 otherwise. If $d \leq 4$, then $P_d = 1$. If $d > 4$, then $P_d$ is the unique root of $g_d(z) = z^d - 4z + 3$ inside the unit disk, which is real and lies in the interval $(3/4,1)$. 
\end{lemma}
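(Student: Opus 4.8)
The plan is to recast the event in Lemma~\ref{th-f2xt-ruin} as a gambler's ruin problem and then follow the template of Lemma~\ref{l-f2t-ruin}, changing only the transition probabilities and the resulting auxiliary polynomial. Set $S_0 = 0$ and $S_{N+1} - S_N = dX_N - 1$, so that $S_N = d\sum_{k=0}^{N-1} X_k - N$ and the event $\{\exists N>0 : \sum_{k=0}^{N-1} X_k < N/d\}$ is exactly $\{\exists N > 0 : S_N < 0\}$. Since each down-step has size $1$, the walk can only pass below $0$ by landing on $-1$, so $P_d$ is the probability that $S_N$ ever reaches $-1$, where each step increases $S_N$ by $d-1$ with probability $1/4$ and decreases it by $1$ with probability $3/4$. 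Introducing an upper barrier $W$ and letting $U_k$ be the probability of ruin starting from value $k$, we get $U_k = 1$ for $k \le -1$, $U_k = 0$ for $k \ge W$, and for $0 \le k < W$ the recurrence $U_k = \tfrac14 U_{k+d-1} + \tfrac34 U_{k-1}$, whose auxiliary equation, after clearing denominators, is exactly $g_d(z) = z^d - 4z + 3 = 0$.

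For $d \le 4$ I would argue $P_d = 1$ directly from the drift. The expected increment is $\EE[S_{N+1}-S_N] = \tfrac14(d-1) - \tfrac34 = (d-4)/4$. For $d \le 3$ this is strictly negative, so $S_N \to -\infty$ almost surely and the walk reaches $-1$ with probability one. For $d = 4$ the increment has mean zero and finite variance, so the integer-valued walk $S_N$ is recurrent and hence $\liminf_N S_N = -\infty$ almost surely, again giving $P_4 = 1$. (The case $d=1$ can alternatively be seen directly, since avoiding ruin for $N$ steps forces $N$ consecutive non-down steps, of probability $4^{-N}\to 0$.) This mirrors the special treatment of $d=2$ in Lemma~\ref{l-f2t-ruin}; the critical degree has simply shifted from $2$ to $4$ because here $g_d'(1) = d-4$ vanishes at $d=4$, making $z=1$ a double root of $g_d$ precisely at the boundary.

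For $d > 4$ the auxiliary polynomial is separable with $\lambda_1 = 1$ and $d-1$ further distinct roots, so $U_k = \sum_j c_j \lambda_j^k$, and solving the boundary conditions by Cramer's rule gives $P_{d,W} = U_0$ as a ratio whose numerator and denominator are each dominated, as $W \to \infty$, by the root of smallest modulus; that ratio tends to the dominant root itself, so $P_d = \lim_{W\to\infty} P_{d,W}$ is that root. It therefore remains to prove the analytic claim that $g_d(z) = z^d - 4z + 3$ has exactly one root in the open unit disk and that this root is real and lies in $(3/4,1)$. Existence of a real root in $(3/4,1)$ is quick: $g_d(3/4) = (3/4)^d > 0$, while $g_d(1) = 0$ with $g_d'(1) = d - 4 > 0$, so $g_d$ is negative just to the left of $1$, and the intermediate value theorem supplies a root $P_d \in (3/4,1)$; studying $g_d'(z) = d z^{d-1} - 4$, which has a single positive critical point for $z>0$, shows this is the only positive real root below $1$.

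The main obstacle is counting the roots inside the unit disk, because the natural Rouch\'e comparison of $z^d$ against $F(z) = 3 - 4z$ fails to be strict exactly at $z = 1$: on $|z| = 1$ one computes $|F(z)|^2 = 25 - 24\cos\theta$, which exceeds $|z^d|^2 = 1$ for every $z \ne 1$ but equals it at $z = 1$, the boundary root. I would circumvent this by applying Rouch\'e on the circle $|z| = \rho$ for a radius $\rho \in (P_d, 1)$. On that circle $\min_{|z|=\rho} |F(z)| = 4\rho - 3$, so the required strict inequality $\rho^d < 4\rho - 3$ is exactly the statement $g_d(\rho) < 0$, which holds throughout $(P_d, 1)$ since $g_d$ is negative between its real roots $P_d$ and $1$. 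Rouch\'e then shows $g_d$ has the same number of zeros in $|z| < \rho$ as $F$, namely one (at $3/4$); letting $\rho \uparrow 1$ shows $P_d$ is the unique zero of $g_d$ in the open unit disk. In particular $P_d$ has strictly smaller modulus than every other root, all of which satisfy $|\lambda| \ge 1$, so it is the dominant root in the limiting ratio above and equals $\lim_{W\to\infty} P_{d,W}$, completing the proof.
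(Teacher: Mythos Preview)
Your proposal is correct and follows the paper's own template: recast as gambler's ruin, derive the recurrence $U_k = \tfrac34 U_{k-1} + \tfrac14 U_{k+d-1}$ with auxiliary polynomial $g_d(z)=z^d-4z+3$, solve the finite-barrier problem via Cramer's rule, and identify $P_d$ as the unique root of $g_d$ inside the unit disk. The Rouch\'e argument you give on the circle $|z|=\rho$ with $\rho\in(P_d,1)$ is a clean way to dodge the boundary equality at $z=1$; the paper merely cites Descartes' rule of signs and Rouch\'e, deferring the details to a supplement, so your version is in fact more explicit.

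The one genuine deviation is your treatment of $d\le 4$. The paper (by analogy with its proof of Lemma~\ref{l-f2t-ruin}) handles the critical case via the double root of $g_d$ at $z=1$: when $d=4$ one writes $U_k=c_1+c_2 k$, solves a $2\times 2$ system, and finds $P_{4,W}\to 1$ as $W\to\infty$. You instead invoke the drift: negative expected increment for $d\le 3$ gives $S_N\to -\infty$ a.s.\ by the strong law, and the mean-zero, finite-variance case $d=4$ is recurrent (Chung--Fuchs), hence $\liminf S_N=-\infty$ a.s. This is shorter and more conceptual, and it makes transparent why the threshold moved from $d=2$ to $d=4$ (the drift $(d-4)/4$ changes sign there, equivalently $g_d'(1)=d-4$ vanishes). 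The paper's linear-algebra route has the advantage of being entirely self-contained and of yielding the exact finite-$W$ value $P_{4,W}=W/(W+1)$ along the way; your probabilistic argument trades that explicitness for brevity and a unified handling of all subcritical $d$.
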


This time, the gambler repeatedly plays a game which pays out $d - 1$ dollars with probability $1/4$, and $-1$ dollars with probability $3/4$. The stopping time corresponds to the number of games before the gambler goes broke. The proof is essentially the same as that of the analogous result in $\FF_2[t]$. In this case the linear recurrence relation is
$$ U_k = \frac{3}{4}U_{k-1} + \frac{1}{4}U_{k+d-1} $$
and our goal is to find the value of $U_0$, representing the probability of ruin (depending on $W$) starting from a value of 0. As in Section \ref{sec-proof-f2t}, we solve the system using Cramer's rule and then take the limit of this quantity as $W \rightarrow \infty$ to find the probability of ruin in a game with no upper limit.\footnote{Full details of this proof are given in a supplemental document available on the author's website.} Figure \ref{fig-f2xt-psigmatable} shows the probability of finite stopping time for $m$ of degree up to 8, accurate to 4 decimal places.

\begin{figure*}[h]
\caption{Finite stopping time probability $P_d$ in $R_r$} \label{fig-f2xt-psigmatable}
\centering
\[
\begin{array}{|c|c|c|c|c|c|c|c|c|} \hline
d & 1 & 2 & 3 & 4 & 5 & 6 & 7 & 8 \\ \hline
P_d & 1	& 1 & 1 & 1	& 0.8882 & 0.8343 & 0.8046 & 0.7867 \\ \hline
\end{array} \]
\end{figure*}

Once again, we prove some corollaries of this result.

\begin{theorem}
Let $d \leq 4$ and let $m \in R_r$ of degree $d$. Then a randomly chosen polynomial $f \in R_r$ has finite $mx+1$ stopping time with probability 1.
\end{theorem}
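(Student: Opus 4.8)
This is precisely the $d \leq 4$ case of Theorem \ref{th-f2xt-everett}: by Lemma \ref{th-f2xt-ruin} the gambler's-ruin probability $P_d$ equals $1$ for every $d \leq 4$, and $P_d$ is by definition the asymptotic probability that a random $f \in R_r$ has finite $mx+1$ stopping time. So the cleanest route is a one-line appeal to Theorem \ref{th-f2xt-everett}. The reason to record the statement separately is to run, in the setting of $R_r$, the stronger argument used for Theorem \ref{cor-f2t-pzero} in $\FF_2[t]$, which upgrades ``finite stopping time with probability $1$'' to ``eventually cyclic with probability $1$,'' and hence rules out divergent trajectories outright.

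The plan for that stronger argument is a descent-and-pigeonhole construction. First I would count the elements of $R_r$ of degree $\leq \deg f$: writing each as $f_0 + x f_1$ with $f_0, f_1 \in \FF_2[t]$ of degree $\leq \deg f$ gives $2^{1+\deg f}$ choices for each component, hence $M := 4^{1+\deg f}$ elements in total (the only change from the $\FF_2[t]$ count $2^{1+\deg f}$). Then, starting from $S_0 = \{f\}$, I invoke Theorem \ref{th-f2xt-everett} for $d \leq 4$ to conclude that with probability $1$ the trajectory reaches a first $T^{k_1}(f)$ of degree $\leq \deg f$; this already forces $\sigma(f) < \infty$. If $T^{k_1}(f) \in S_0$ we have a cycle; otherwise set $S_1 = S_0 \cup \{ T^{k_1}(f) \}$ and repeat, each time applying Theorem \ref{th-f2xt-everett} to the current low-degree iterate. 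Since every newly visited polynomial has degree $\leq \deg f$ and there are only $M$ such polynomials, after at most $M$ descents the pool is exhausted and the next guaranteed descent revisits an earlier value, producing a cycle. A cyclic trajectory cannot diverge, which completes the sharper statement.

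The main obstacle is the correct handling of ``with probability $1$.'' Theorem \ref{th-f2xt-everett} and Lemma \ref{th-f2xt-ruin} give an asymptotic-density statement about a uniformly random \emph{starting} polynomial, whereas the descent argument applies that conclusion repeatedly to the deterministic iterates $T^{k_i}(f)$ of a single orbit. What must be justified is that the random-walk model rests on Theorem \ref{th-f2xt-phi} --- the fact that the parity-sequence map $\Phi_m$ is a bijection on every residue class, so the uniform distribution of parity sequences is a per-polynomial structural fact rather than a property of a particular random choice --- and that the resulting descent events along the orbit may therefore be chained. Granting this, exactly as is done implicitly in the proof of Theorem \ref{cor-f2t-pzero}, the cardinality bound $M = 4^{1+\deg f}$ closes the argument.
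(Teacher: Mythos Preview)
Your proposal is correct and matches the paper's approach exactly: the paper's proof is literally the one sentence ``We use exactly the same proof as in Theorem~\ref{cor-f2t-pzero}, with the minor difference that $N = 4^{1+\deg f}$ instead of $2^{1+\deg f}$,'' which is precisely your descent-and-pigeonhole argument with the corrected count. Your added observations---that the statement as written already falls out of Theorem~\ref{th-f2xt-everett} and that the real content is the upgrade to ``eventually cyclic,'' together with your flagging of the subtlety in chaining the probability-$1$ descents along a single orbit---go somewhat beyond what the paper makes explicit, but the underlying method is identical.
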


We use exactly the same proof as in Theorem \ref{cor-f2t-pzero}, with the minor difference that $N = 4^{1 + \deg f}$ instead of $2^{1 + \deg f}$.

\begin{theorem}
For any positive integer $N$, we can find a polynomial $f \in R_r$ such that $\sigma(f) > N$.
\end{theorem}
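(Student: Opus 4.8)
The plan is to follow the $\FF_2[t]$ argument of Theorem~\ref{cor-f2t-bigsigma} almost verbatim, exploiting the bijection of Theorem~\ref{th-f2xt-phi}. The key observation is that among the four parity classes, only $p_k = 1+x$ triggers the multiplication branch $T(f) = (mf+1+x)/t$; the other three classes merely divide by $t$ and drop the degree by one. A parity sequence that consists entirely of $1+x$ therefore forces every step of the trajectory to be a multiplication step, which is exactly what drives the degree upward.

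Concretely, I would take the vector $\vec{p} = (1+x, 1+x, \ldots, 1+x) \in \left\{ 0,1,x,1+x \right\}^N$ and apply the bijection $\Phi_m \colon R_r/t^N \to \left\{ 0,1,x,1+x \right\}^N$ of Theorem~\ref{th-f2xt-phi} to obtain the unique polynomial $f = g_{N-1}$ of degree $< N$ realizing this sequence. For this $f$ we have $s(k) = k$ for every $0 \le k \le N$, so the degree formula
$$ \deg T^k(f) = \deg f - k + (\deg m)\,s(k) = \deg f + k(\deg m - 1) $$
holds for all $0 \le k \le N$. It then only remains to check that the increment $\deg m - 1$ is strictly positive.

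To see that $\deg m \ge 2$, recall the standing hypotheses on $m$. The requirement $m \equiv x \bmod t$ forces $m_1(0) = 1$, so $\deg m_1 \ge 0$; and the hypothesis of Lemma~\ref{th-f2xt-m}, namely $\deg m_1 - \deg m_0 < -1$, then gives $\deg m = \deg m_0 > \deg m_1 + 1 \ge 1$, hence $\deg m \ge 2$. Consequently $\deg m - 1 \ge 1$ and the sequence $\deg T^k(f)$ is strictly increasing in $k$. In particular $\deg T^k(f) > \deg f$ for all $1 \le k \le N$, so no earlier iterate drops below $\deg f$ and $\sigma(f) > N$, as required.

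I do not expect any genuine obstacle, since the construction is a direct transcription of the $\FF_2[t]$ case. The one point deserving care---and the only place where the $R_r$ setting differs substantively---is the justification that the degree formula above is exact rather than merely the heuristic random-walk estimate. This rests on Lemma~\ref{th-f2xt-m}, which guarantees $\deg(m\cdot T^k(f)) = \deg m + \deg T^k(f)$ at every iterate precisely because $m$ satisfies $\deg m_1 - \deg m_0 < -1$; adding the degree-$0$ term $1+x$ and dividing by $t$ then changes the degree by exactly $-1$, since $\deg(m\cdot T^k(f)) \ge 2$ at each step. Without multiplicativity of degree in $R_r$ the telescoping that yields the clean formula could fail, but as $m$ is assumed throughout to meet this condition, the formula is exact and the argument goes through.
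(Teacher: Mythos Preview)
Your proposal is correct and follows essentially the same approach as the paper: construct the all-$(1+x)$ parity vector, invert $\Phi_m$ to obtain $f$, and invoke Lemma~\ref{th-f2xt-m} so that each of the $N$ multiplication steps raises the degree by exactly $\deg m - 1$. You supply a bit more detail than the paper does---checking explicitly that $\deg m \ge 2$ from the standing hypotheses and that every intermediate iterate (not just the $N$-th) stays above $\deg f$---but the argument is the same.
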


\begin{proof}
Simply create the vector $\left[ 1+x, 1+x, \ldots, 1+x \right] \in \left\{ 0, 1, x, 1+x \right\}^N$ and use the bijection $\Phi_m: R_r/t^N \rightarrow \left\{ 0, 1 x, 1+x \right\}^N$ to find the polynomial $f$ of degree $<N$ with this parity sequence. The sequence is made entirely of ones, so
$$ \deg T^N(f) = \deg f + N(-1 + \deg m ) $$
(once again we rely on Lemma \ref{th-f2xt-m}).
\end{proof}

\subsection{Experimental Data}

As in $\FF_2[t]$, we implemented the $mx+1$ system in $R_r$ in such a way as to make computations as efficient as possible. For each polynomial $f = f_0(t) + xf_1(t) \in R_r$ with $\deg f < 10$, we computed the trajectory of $f$ up to $10^5$ iterations of the $mx+1$ function. We carried out this process for several choices of $m = m_0 + xm_1$ with $m_0, m_1 \in \FF_2[t]$. Figure \ref{fig-f2xt-timeouts} shows the density of polynomials with long stopping times for each $m$. Much like the $\FF_2[t]$ case, the data generally agrees with our predictions, though we do see a higher than expected occurrence of high stopping times when the degree of $m$ is a particular value. In $R_r$, the most interesting $m$ polynomials seem to be those of degree 4.

\begin{figure*}[h]\label{fig-f2xt-timeouts}
\centering
\caption{Density of long stopping times ($\sigma(f) > 10^5$) for various $m \in R_r$.}
	\begin{tikzpicture}[scale=2]

	\draw [fill, color=blue, opacity=0.028 ] ( 0, 0 ) rectangle ( 0.4, 0.4 );
	\node at (0.2, 0.2) {\small 0.03};
	\draw [fill, color=blue, opacity=0.004 ] ( 0, 0.4 ) rectangle ( 0.4, 0.8 );
	\node at (0.2, 0.6) {\small 0.00};
	\node [below, anchor=west, rotate=-60] at (0.2, 0) { $t^3$ };
	\draw [fill, color=blue, opacity=0.031 ] ( 0.4, 0 ) rectangle ( 0.8, 0.4 );
	\node at (0.6, 0.2) {\small 0.03};
	\draw [fill, color=blue, opacity=0.007 ] ( 0.4, 0.4 ) rectangle ( 0.8, 0.8 );
	\node at (0.6, 0.6) {\small 0.01};
	\node [below, anchor=west, rotate=-60] at (0.6, 0) { $t^3+t$ };
	\draw [fill, color=blue, opacity=0.008 ] ( 0.8, 0 ) rectangle ( 1.2, 0.4 );
	\node at (1, 0.2) {\small 0.01};
	\draw [fill, color=blue, opacity=0.009 ] ( 0.8, 0.4 ) rectangle ( 1.2, 0.8 );
	\node at (1, 0.6) {\small 0.01};
	\node [below, anchor=west, rotate=-60] at (1, 0) { $t^3+t^2$ };
	\draw [fill, color=blue, opacity=0.008 ] ( 1.2, 0 ) rectangle ( 1.6, 0.4 );
	\node at (1.4, 0.2) {\small 0.01};
	\draw [fill, color=blue, opacity=0.009 ] ( 1.2, 0.4 ) rectangle ( 1.6, 0.8 );
	\node at (1.4, 0.6) {\small 0.01};
	\node [below, anchor=west, rotate=-60] at (1.4, 0) { $t^3+t$ };
	\draw [fill, color=blue, opacity=0.105 ] ( 1.6, 0 ) rectangle ( 2, 0.4 );
	\node at (1.8, 0.2) {\small 0.10};
	\draw [fill, color=blue, opacity=0.073 ] ( 1.6, 0.4 ) rectangle ( 2, 0.8 );
	\node at (1.8, 0.6) {\small 0.07};
	\draw [fill, color=blue, opacity=0.036 ] ( 1.6, 0.8 ) rectangle ( 2, 1.2 );
	\node at (1.8, 1) {\small 0.04};
	\draw [fill, color=blue, opacity=0.036 ] ( 1.6, 1.2 ) rectangle ( 2, 1.6 );
	\node at (1.8, 1.4) {\small 0.04};
	\node [below, anchor=west, rotate=-60] at (1.8, 0) { $t^4$ };
	\draw [fill, color=blue, opacity=0.108 ] ( 2, 0 ) rectangle ( 2.4, 0.4 );
	\node at (2.2, 0.2) {\small 0.11};
	\draw [fill, color=blue, opacity=0.070 ] ( 2, 0.4 ) rectangle ( 2.4, 0.8 );
	\node at (2.2, 0.6) {\small 0.07};
	\draw [fill, color=blue, opacity=0.036 ] ( 2, 0.8 ) rectangle ( 2.4, 1.2 );
	\node at (2.2, 1) {\small 0.04};
	\draw [fill, color=blue, opacity=0.037 ] ( 2, 1.2 ) rectangle ( 2.4, 1.6 );
	\node at (2.2, 1.4) {\small 0.04};
	\node [below, anchor=west, rotate=-60] at (2.2, 0) { $t^4+t$ };
	\draw [fill, color=blue, opacity=0.075 ] ( 2.4, 0 ) rectangle ( 2.8, 0.4 );
	\node at (2.6, 0.2) {\small 0.07};
	\draw [fill, color=blue, opacity=0.074 ] ( 2.4, 0.4 ) rectangle ( 2.8, 0.8 );
	\node at (2.6, 0.6) {\small 0.07};
	\draw [fill, color=blue, opacity=0.037 ] ( 2.4, 0.8 ) rectangle ( 2.8, 1.2 );
	\node at (2.6, 1) {\small 0.04};
	\draw [fill, color=blue, opacity=0.037 ] ( 2.4, 1.2 ) rectangle ( 2.8, 1.6 );
	\node at (2.6, 1.4) {\small 0.04};
	\node [below, anchor=west, rotate=-60] at (2.6, 0) { $t^4+t^2$ };
	\draw [fill, color=blue, opacity=0.076 ] ( 2.8, 0 ) rectangle ( 3.2, 0.4 );
	\node at (3, 0.2) {\small 0.08};
	\draw [fill, color=blue, opacity=0.071 ] ( 2.8, 0.4 ) rectangle ( 3.2, 0.8 );
	\node at (3, 0.6) {\small 0.07};
	\draw [fill, color=blue, opacity=0.038 ] ( 2.8, 0.8 ) rectangle ( 3.2, 1.2 );
	\node at (3, 1) {\small 0.04};
	\draw [fill, color=blue, opacity=0.036 ] ( 2.8, 1.2 ) rectangle ( 3.2, 1.6 );
	\node at (3, 1.4) {\small 0.04};
	\node [below, anchor=west, rotate=-60] at (3, 0) { $t^4+t$ };
	\draw [fill, color=blue, opacity=0.046 ] ( 3.2, 0 ) rectangle ( 3.6, 0.4 );
	\node at (3.4, 0.2) {\small 0.05};
	\draw [fill, color=blue, opacity=0.040 ] ( 3.2, 0.4 ) rectangle ( 3.6, 0.8 );
	\node at (3.4, 0.6) {\small 0.04};
	\draw [fill, color=blue, opacity=0.036 ] ( 3.2, 0.8 ) rectangle ( 3.6, 1.2 );
	\node at (3.4, 1) {\small 0.04};
	\draw [fill, color=blue, opacity=0.037 ] ( 3.2, 1.2 ) rectangle ( 3.6, 1.6 );
	\node at (3.4, 1.4) {\small 0.04};
	\node [below, anchor=west, rotate=-60] at (3.4, 0) { $t^4+t^3$ };
	\draw [fill, color=blue, opacity=0.044 ] ( 3.6, 0 ) rectangle ( 4, 0.4 );
	\node at (3.8, 0.2) {\small 0.04};
	\draw [fill, color=blue, opacity=0.037 ] ( 3.6, 0.4 ) rectangle ( 4, 0.8 );
	\node at (3.8, 0.6) {\small 0.04};
	\draw [fill, color=blue, opacity=0.037 ] ( 3.6, 0.8 ) rectangle ( 4, 1.2 );
	\node at (3.8, 1) {\small 0.04};
	\draw [fill, color=blue, opacity=0.036 ] ( 3.6, 1.2 ) rectangle ( 4, 1.6 );
	\node at (3.8, 1.4) {\small 0.04};
	\node [below, anchor=west, rotate=-60] at (3.8, 0) { $t^4+t^3+t$ };
	\draw [fill, color=blue, opacity=0.046 ] ( 4, 0 ) rectangle ( 4.4, 0.4 );
	\node at (4.2, 0.2) {\small 0.05};
	\draw [fill, color=blue, opacity=0.039 ] ( 4, 0.4 ) rectangle ( 4.4, 0.8 );
	\node at (4.2, 0.6) {\small 0.04};
	\draw [fill, color=blue, opacity=0.036 ] ( 4, 0.8 ) rectangle ( 4.4, 1.2 );
	\node at (4.2, 1) {\small 0.04};
	\draw [fill, color=blue, opacity=0.038 ] ( 4, 1.2 ) rectangle ( 4.4, 1.6 );
	\node at (4.2, 1.4) {\small 0.04};
	\node [below, anchor=west, rotate=-60] at (4.2, 0) { $t^4+t^3+t^2$ };
	\draw [fill, color=blue, opacity=0.044 ] ( 4.4, 0 ) rectangle ( 4.8, 0.4 );
	\node at (4.6, 0.2) {\small 0.04};
	\draw [fill, color=blue, opacity=0.036 ] ( 4.4, 0.4 ) rectangle ( 4.8, 0.8 );
	\node at (4.6, 0.6) {\small 0.04};
	\draw [fill, color=blue, opacity=0.037 ] ( 4.4, 0.8 ) rectangle ( 4.8, 1.2 );
	\node at (4.6, 1) {\small 0.04};
	\draw [fill, color=blue, opacity=0.035 ] ( 4.4, 1.2 ) rectangle ( 4.8, 1.6 );
	\node at (4.6, 1.4) {\small 0.04};
	\node [below, anchor=west, rotate=-60] at (4.6, 0) { $t^4+t^3+t$ };
	\draw [fill, color=blue, opacity=0.153 ] ( 4.8, 0 ) rectangle ( 5.2, 0.4 );
	\node at (5, 0.2) {\small 0.15};
	\draw [fill, color=blue, opacity=0.145 ] ( 4.8, 0.4 ) rectangle ( 5.2, 0.8 );
	\node at (5, 0.6) {\small 0.14};
	\draw [fill, color=blue, opacity=0.127 ] ( 4.8, 0.8 ) rectangle ( 5.2, 1.2 );
	\node at (5, 1) {\small 0.13};
	\draw [fill, color=blue, opacity=0.128 ] ( 4.8, 1.2 ) rectangle ( 5.2, 1.6 );
	\node at (5, 1.4) {\small 0.13};
	\node [below, anchor=west, rotate=-60] at (5, 0) { $t^5$ };
	\draw (0, 0) -- (5.2, 0) -- (5.2, 1.6) -- (0, 1.6) -- cycle;
	\node [left, anchor=east] at (0, 0.2) { $1$ };
	\node [left, anchor=east] at (0, 0.6) { $t+1$ };
	\node [left, anchor=east] at (0, 1) { $t^2 + 1$ };
	\node [left, anchor=east] at (0, 1.4) { $t^2 + t + 1$ };
	\node [below] at ( 2.6, -1 ) {$m_0$};
	\node [left] at ( -1, 0.8 ) {$m_1$};

\end{tikzpicture}
\end{figure*}

\end{document}